\documentclass[11pt]{article}

\usepackage{amsmath,amssymb,amsthm}
\usepackage[a4paper,margin=1in]{geometry}
\usepackage[hidelinks]{hyperref}

\newtheorem{theorem}{Theorem}[section]
\newtheorem{proposition}[theorem]{Proposition}
\newtheorem{lemma}[theorem]{Lemma}
\newtheorem{corollary}[theorem]{Corollary}
\theoremstyle{definition}

\theoremstyle{remark}
\newtheorem{remark}[theorem]{Remark}

\DeclareMathOperator{\tr}{tr}

\DeclareMathOperator{\dist}{dist}
\newcommand{\HS}{\mathrm{HS}}
\newcommand{\T}{\mathsf{T}}
\newcommand{\R}{\mathbb{R}}
\newcommand{\bu}{\mathbf u}
\newcommand{\be}{\mathbf e}
\newcommand{\bv}{\mathbf v}
\newcommand{\bw}{\mathbf w}
\newcommand{\bz}{\mathbf z}
\newcommand{\bq}{\mathbf q}
\newcommand{\bff}{\mathbf f}

\newcommand{\norm}[1]{\lVert #1 \rVert}

\title{A Projection Identity for Simplices\\
Sharp Inequalities, Converse Results, and Affine Projections}
\author{Quang Hung Tran}
\date{}

\begin{document}

\maketitle

\begin{abstract}
We study a projection identity for a simplex in Euclidean space, written in terms of the frame operator of its unit edge directions. For a right simplex, the identity leads to a sharp family of distance inequalities and a complete description of equality. For a general simplex, the same formula is controlled by the spectrum of the Gram matrix through the Ky Fan principle. We prove converse results that characterise right simplices and determine the smallest number of projection subspaces needed to force orthogonality, together with an optimal quantitative estimate. We also treat affine projection subspaces and show how the original inequality for mutually perpendicular vectors fits into the same framework.
\end{abstract}

\noindent
\textbf{Keywords.} Right simplex, orthogonal projection, Gram matrix, distance inequality.

\medskip
\noindent
\textbf{2020 Mathematics Subject Classification.}
Primary 51M16; Secondary 51M04, 52B11, 15A42, 15A60.

\section{Introduction}\label{sec:intro}

Throughout, $SA_1\ldots A_n$ denotes a simplex in Euclidean $n$-space $\R^n$, and we
identify $S$ with the origin. We call the simplex \emph{right at $S$} if the edges issuing
from $S$ are mutually perpendicular, so
\[
SA_i\perp SA_j \text{ whenever } i\ne j.
\]
For a right simplex, the $n$ orthogonal edges at $S$ determine an orthotope. If $P$ is the
vertex opposite $S$, then
\[
SP^2=\sum_{i=1}^{n}SA_i^2.
\]
For a general simplex we use the abbreviation
\[
\rho^2:=\sum_{i=1}^{n}SA_i^2,
\]
so that $\rho=SP$ in the right case.

Right simplices provide a natural higher-dimensional setting for Pythagorean identities.
The classical $n$-dimensional Pythagorean theorem and several of its proofs may be found in
\cite{DonchianCoxeter,ConantBeyer,LinLin,Porter,EiflerRhee,Cook,Drucker,TranPythagoras}.
In three dimensions, de Gua's theorem is the corresponding statement for a tetrahedron with
three mutually perpendicular edges at one vertex; a related vector generalisation was
considered in \cite{TranDeGua}. For general background on Euclidean and simplex geometry,
see \cite{Berger,Coxeter,Fiedler}. Related inequalities and projection constructions for
simplices appear in \cite{TranKlamkin,TranVanAubel,TranGrace}.

The present work began with the following elementary inequality. If $SA_1\ldots A_n$ is
right at $S$, $L$ is a line through $S$, and $H_i$ is the orthogonal projection of $A_i$
onto $L$, then
\begin{equation}\label{eq:motivating}
\sum_{i=1}^{n}A_iH_i\le\sqrt{n-1}\,SP.
\end{equation}
The question that led to this paper is what lies behind \eqref{eq:motivating}. The
basic calculation is elementary. For an orthonormal basis $\be_1,\ldots,\be_n$ and a linear map
$T$,
\[
\sum_{i=1}^{n}\norm{T\be_i}^2=\norm{T}_{\HS}^2.
\]
For orthogonal projections this combines with a trace identity. The interest lies in the
geometric consequences of this simple observation.

We first obtain a master identity valid for arbitrary nonzero edge vectors. For right
simplices it gives a sharp one-parameter family of distance inequalities. For general
simplices, the same identity is governed by the extreme partial sums of the Gram
spectrum through the Ky Fan principle. We then prove two converse results. The first
characterises right simplices, while the second shows that exactly
\[
\frac{n(n+1)}2-1
\]
subspaces of any fixed dimension $1\le k\le n-1$ are necessary and sufficient, in the
optimal sense, to detect orthogonality. We also obtain the best constants in a quantitative
estimate for the departure from orthogonality. Finally, we treat affine projection
subspaces and return to the vector inequality that motivated the paper.

\section{Notation and the master identity}\label{sec:master}

\subsection{Standing notation}

Let $\bu_1,\ldots,\bu_n$ be nonzero vectors in $\R^n$, and write
\[
a_i=\norm{\bu_i}, 
\be_i=\frac{\bu_i}{a_i}.
\]
When the vectors are linearly independent, we identify them with the edge vectors
$\overrightarrow{SA_i}$ of a simplex $SA_1\ldots A_n$. Put
\[
E=[\,\be_1\ \cdots\ \be_n\,], 
M=EE^{\T}=\sum_{i=1}^{n}\be_i\be_i^{\T}, 
G=E^{\T}E=\bigl(\langle\be_i,\be_j\rangle\bigr)_{i,j=1}^{n}.
\]
Thus $M$ is the frame operator and $G$ is the Gram matrix of the unit vectors. Both are
positive semidefinite, they have the same spectrum, and
\begin{equation}\label{eq:trM}
\tr M=\tr G=n.
\end{equation}
If the vectors are linearly independent, both matrices are positive definite. Moreover,
$M=I$ if and only if $E$ is orthogonal, equivalently if and only if the vectors $\bu_i$ are
mutually perpendicular.

Let $L\subseteq\R^n$ be a $k$-dimensional subspace and let $Q$ be the orthogonal projection
onto $L$. If $A_i=S+\bu_i$, let $H_i$ be the orthogonal projection of $A_i$ onto $L$, and
for a real parameter $\lambda$ define $B_i$ by
\[
\overrightarrow{SB_i}=\lambda\bu_i.
\]
No linear independence is needed for the identities in this section.

\subsection{The operator identity}

\begin{theorem}[Operator form]\label{thm:operator}
Let $V$ be an $n$-dimensional Euclidean vector space and let
$\bu_1,\ldots,\bu_n$ be a nonzero orthogonal basis of $V$. If $T\colon V\to W$ is linear,
where $W$ is a finite-dimensional Euclidean vector space, then
\begin{equation}\label{eq:HS}
\sum_{i=1}^{n}\frac{\norm{T\bu_i}^{2}}{\norm{\bu_i}^{2}}=\norm{T}_{\HS}^{2},
\end{equation}
where $\norm{T}_{\HS}^{2}=\tr(T^{*}T)$. Consequently,
\begin{equation}\label{eq:HSCS}
\sum_{i=1}^{n}\norm{T\bu_i}
\le
\norm{T}_{\HS}\Bigl(\sum_{i=1}^{n}\norm{\bu_i}^{2}\Bigr)^{1/2},
\end{equation}
with equality if and only if there is a constant $c\ge0$ such that
\[
\norm{T\bu_i}=c\,\norm{\bu_i}^2 \text{ for } 1\le i\le n.
\]
\end{theorem}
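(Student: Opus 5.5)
The plan is to normalise the basis and then apply Cauchy--Schwarz. Since the $\bu_i$ form a nonzero orthogonal basis of $V$, the vectors $\be_i=\bu_i/\norm{\bu_i}$ form an orthonormal basis. Linearity of $T$ gives
\[
\frac{\norm{T\bu_i}^2}{\norm{\bu_i}^2}=\norm{T\be_i}^2 .
\]
It therefore suffices to show $\sum_i\norm{T\be_i}^2=\tr(T^*T)$ for an orthonormal basis. Writing $\norm{T\be_i}^2=\langle T^*T\be_i,\be_i\rangle$ and computing the trace of the self-adjoint operator $T^*T$ on $V$ in the orthonormal basis $(\be_i)$, the sum of the diagonal entries is exactly $\sum_i\langle T^*T\be_i,\be_i\rangle$. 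Since the trace does not depend on the basis, this gives \eqref{eq:HS}.

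For \eqref{eq:HSCS}, I would write each term as a product:
\[
\norm{T\bu_i}=\frac{\norm{T\bu_i}}{\norm{\bu_i}}\cdot\norm{\bu_i}.
\]
Applying the Cauchy--Schwarz inequality in $\R^n$ to the vectors $x=(\norm{T\bu_i}/\norm{\bu_i})_i$ and $y=(\norm{\bu_i})_i$ gives
\[
\sum_i\norm{T\bu_i}\le\Bigl(\sum_i\frac{\norm{T\bu_i}^2}{\norm{\bu_i}^2}\Bigr)^{1/2}\Bigl(\sum_i\norm{\bu_i}^2\Bigr)^{1/2}.
\]
By \eqref{eq:HS} the first factor equals $\norm{T}_{\HS}$.

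The only delicate point is the equality case. Equality in Cauchy--Schwarz holds if and only if $x$ and $y$ are linearly dependent. Because $y$ has all entries strictly positive, $y\neq0$, so dependence means $x=c\,y$ for some real $c$. Since $x$ has nonnegative entries and $y$ has positive entries, $c\ge0$. The relation $x=cy$ says $\norm{T\bu_i}/\norm{\bu_i}=c\norm{\bu_i}$, that is, $\norm{T\bu_i}=c\norm{\bu_i}^2$. Conversely, if $\norm{T\bu_i}=c\norm{\bu_i}^2$ for all $i$, then both sides of \eqref{eq:HSCS} equal $c\sum_i\norm{\bu_i}^2$. This also covers $c=0$, which corresponds to $T=0$.

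I expect no real obstacle. The only care needed is to note that the Hilbert--Schmidt norm is basis-independent via the trace, and that the equality condition uses $y\neq0$ in order to pass from linear dependence to proportionality with a nonnegative constant.
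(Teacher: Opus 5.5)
Your proof is correct and follows the paper's argument: normalise to the orthonormal basis $\be_i$, identify $\sum_i\norm{T\be_i}^2$ with $\tr(T^*T)$ by basis invariance, and apply Cauchy--Schwarz to $\sum_i(\norm{T\bu_i}/\norm{\bu_i})\norm{\bu_i}$. Your treatment of the equality case spells out what the paper leaves implicit. It uses that the vector of lengths has strictly positive entries to get proportionality with a constant $c\ge0$.
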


\begin{proof}
The vectors $\be_i=\bu_i/\norm{\bu_i}$ form an orthonormal basis of $V$, hence
\[
\sum_{i=1}^{n}\frac{\norm{T\bu_i}^{2}}{\norm{\bu_i}^{2}}
=
\sum_{i=1}^{n}\norm{T\be_i}^{2}
=
\norm{T}_{\HS}^{2}
\]
by the basis invariance of the Hilbert--Schmidt norm \cite[Ch.~5]{HornJohnson}. Applying the
Cauchy--Schwarz inequality \cite{HLP} to
$\sum_i(\norm{T\bu_i}/\norm{\bu_i})\norm{\bu_i}$ gives \eqref{eq:HSCS} and its equality
case.
\end{proof}

\begin{lemma}\label{lem:trace}
For the unit vectors $\be_1,\ldots,\be_n$ defined above,
\[
\sum_{i=1}^{n}\norm{Q\be_i}^{2}=\tr(QM).
\]
\end{lemma}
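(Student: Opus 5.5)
The plan is a direct computation that uses only two properties of $Q$: it is an orthogonal projection, so $Q^{\T}=Q$ and $Q^2=Q$, and the trace is cyclic. No orthogonality or linear independence of the $\be_i$ is needed, which matches the remark that the identities of this section hold without such assumptions.

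First, I will rewrite each summand as a quadratic form. For each $i$,
\[
\norm{Q\be_i}^2=\langle Q\be_i,Q\be_i\rangle=\be_i^{\T}Q^{\T}Q\be_i=\be_i^{\T}Q\be_i .
\]
Second, I will convert this scalar into a trace. Since a $1\times 1$ matrix equals its trace and the trace is cyclic,
\[
\be_i^{\T}Q\be_i=\tr(\be_i^{\T}Q\be_i)=\tr(Q\be_i\be_i^{\T}).
\]
Third, I will sum over $i$ and use the linearity of $X\mapsto\tr(QX)$:
\[
\sum_{i=1}^{n}\norm{Q\be_i}^2=\tr\Bigl(Q\sum_{i=1}^{n}\be_i\be_i^{\T}\Bigr)=\tr(QM),
\]
which is the claimed identity, by the definition $M=\sum_i\be_i\be_i^{\T}$.

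There is no genuine obstacle here. The only point that needs care is the first step: $\norm{Q\bv}^2=\bv^{\T}Q\bv$ relies on $Q$ being both self-adjoint and idempotent. An oblique projection would not satisfy this, so I will state explicitly that $Q$ is the orthogonal projection onto $L$.

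As an alternative, I could write the sum as $\tr(E^{\T}QE)$, that is, as a sum of diagonal entries. The same cyclicity argument then gives $\tr(QEE^{\T})=\tr(QM)$.
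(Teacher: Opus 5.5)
Your proof is correct and follows the paper's argument: the paper also uses $Q=Q^{*}=Q^{2}$ to write $\norm{Q\be_i}^{2}=\langle Q\be_i,\be_i\rangle=\tr(Q\be_i\be_i^{\T})$. It then sums over $i$ using $M=\sum_i\be_i\be_i^{\T}$, exactly as you do.
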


\begin{proof}
Since $Q=Q^{*}=Q^2$,
\[
\norm{Q\be_i}^{2}
=
\langle Q\be_i,\be_i\rangle
=
\tr\bigl(Q\be_i\be_i^{\T}\bigr).
\]
Summing over $i$ gives the result.
\end{proof}

\begin{proposition}[Master identity]\label{prop:master}
For arbitrary nonzero vectors $\bu_1,\ldots,\bu_n$ in $\R^n$, every subspace $L$ through
$S$, and every real $\lambda$,
\begin{equation}\label{eq:master}
\sum_{i=1}^{n}\frac{B_iH_i^{2}}{SA_i^{2}}
=
n\lambda^{2}+(1-2\lambda)\tr(QM).
\end{equation}
\end{proposition}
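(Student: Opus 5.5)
With $S$ at the origin we have $B_i=\lambda\bu_i$ and $H_i=Q\bu_i$. Hence $\overrightarrow{H_iB_i}=\lambda\bu_i-Q\bu_i$. Dividing by $SA_i=a_i$ and using $\bu_i=a_i\be_i$, each summand becomes
\[
\frac{B_iH_i^2}{SA_i^2}=\norm{\lambda\be_i-Q\be_i}^2 .
\]
The whole proof reduces to expanding this squared norm termwise and summing.

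First, I expand
\[
\norm{\lambda\be_i-Q\be_i}^2=\lambda^2\norm{\be_i}^2-2\lambda\langle\be_i,Q\be_i\rangle+\norm{Q\be_i}^2 .
\]
Since $Q$ is self-adjoint and idempotent, $\langle\be_i,Q\be_i\rangle=\langle Q\be_i,Q\be_i\rangle=\norm{Q\be_i}^2$. The last two terms therefore combine to $(1-2\lambda)\norm{Q\be_i}^2$. With $\norm{\be_i}=1$, the summand equals $\lambda^2+(1-2\lambda)\norm{Q\be_i}^2$.

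Next, I sum over $i$. The $\lambda^2$ terms give $n\lambda^2$, and Lemma~\ref{lem:trace} replaces $\sum_i\norm{Q\be_i}^2$ by $\tr(QM)$. This yields \eqref{eq:master}.

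Only the nonvanishing of the $\bu_i$ is used, and no orthogonality or independence enters. There is no real obstacle. The one point needing care is the identification of $H_i$ with $Q\bu_i$, which relies on $L$ passing through $S$ so that $Q$ is a linear projection. Once that is set, the remaining step is the idempotence identity $\langle \be_i,Q\be_i\rangle=\norm{Q\be_i}^2$, which merges the cross term with the projection term.
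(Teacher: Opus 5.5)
Your proof is correct and follows the paper's argument exactly. You write each summand as $\norm{(Q-\lambda I)\be_i}^2$, expand it to $\lambda^2+(1-2\lambda)\norm{Q\be_i}^2$ using $Q=Q^{*}=Q^{2}$, and sum using Lemma~\ref{lem:trace}; the paper does the same, only compressing the expansion step you spell out.
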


\begin{proof}
Because $\overrightarrow{SH_i}=Q\bu_i$ and
$\overrightarrow{SB_i}=\lambda\bu_i$,
\[
\frac{B_iH_i^{2}}{SA_i^{2}}
=
\norm{(Q-\lambda I)\be_i}^{2}
=
\lambda^{2}+(1-2\lambda)\norm{Q\be_i}^{2}.
\]
Summing and applying Lemma~\ref{lem:trace} proves \eqref{eq:master}.
\end{proof}

\begin{remark}[Ambient dimension]\label{rem:ambient}
The assumption that the $n$ edge vectors span the ambient $n$-space matters only when one
replaces $\tr(QM)$ by the dimension $k$. If mutually perpendicular edges span an
$n$-dimensional subspace $V$ of a larger space $\R^N$, then $M=P_V$ and
\[
\tr(QM)=\tr(QP_V)
=
\sum_{j=1}^{\min(k,n)}\cos^2\vartheta_j,
\]
where $\vartheta_j$ are the principal angles between $L$ and $V$ \cite{BjorckGolub}. In
particular, $\tr(QM)=k$ precisely when $L\subseteq V$.
\end{remark}

\section{Right simplices and the sharp inequality}\label{sec:right}

For a right simplex, $M=I$ and $\tr(QM)=k$. The master identity therefore becomes
independent of the position of the projection subspace.

\begin{theorem}\label{thm:right}
Let $SA_1\ldots A_n$ be right at $S$ in $\R^n$, let $L$ be a $k$-dimensional subspace
through $S$ with $1\le k\le n-1$, and let $\lambda$ be real. Then
\begin{equation}\label{eq:identity}
\sum_{i=1}^{n}\frac{B_iH_i^{2}}{SA_i^{2}}
=
n\lambda^{2}-2k\lambda+k,
\end{equation}
and
\begin{equation}\label{eq:inequality}
\sum_{i=1}^{n}B_iH_i
\le
\sqrt{n\lambda^{2}-2k\lambda+k}\,SP.
\end{equation}
Equality in \eqref{eq:inequality} holds if and only if there is a constant $c>0$ such that
\begin{equation}\label{eq:equalitycase}
SA_i=c\,\norm{(Q-\lambda I)\be_i} \text{ for } 1\le i\le n.
\end{equation}
\end{theorem}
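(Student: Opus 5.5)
The plan is to specialise Proposition~\ref{prop:master} and then apply Theorem~\ref{thm:operator} with the linear map $T=Q-\lambda I$.

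\emph{The identity.} Since the simplex is right at $S$, the unit edge vectors $\be_1,\ldots,\be_n$ form an orthonormal basis of $\R^n$. Hence $E$ is orthogonal and $M=EE^{\T}=I$. Therefore $\tr(QM)=\tr Q=\dim L=k$, because the trace of an orthogonal projection equals its rank. Substituting into \eqref{eq:master} gives
\[
n\lambda^2+(1-2\lambda)k=n\lambda^2-2k\lambda+k,
\]
which is \eqref{eq:identity}. Alternatively, Theorem~\ref{thm:operator} applied to $T=Q-\lambda I$ yields the same value directly. Indeed, $(Q-\lambda I)^*(Q-\lambda I)=Q-2\lambda Q+\lambda^2 I$, whose trace is $k-2\lambda k+n\lambda^2$.

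\emph{The inequality.} The key observation is $\overrightarrow{B_iH_i}=Q\bu_i-\lambda\bu_i=T\bu_i$, so $B_iH_i=\norm{T\bu_i}$. The edges $\bu_i=\overrightarrow{SA_i}$ form a nonzero orthogonal basis, so \eqref{eq:HSCS} applies and gives
\[
\sum_i B_iH_i\le \norm{T}_{\HS}\Bigl(\sum_i SA_i^2\Bigr)^{1/2}.
\]
Here $\norm{T}_{\HS}^2=n\lambda^2-2k\lambda+k$ by the identity above. In the right case $\bigl(\sum_i SA_i^2\bigr)^{1/2}=\rho=SP$. This proves \eqref{eq:inequality}. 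Note that the radicand is a sum of squares, so it is nonnegative. In fact it is strictly positive for $1\le k\le n-1$: the $\lambda$-discriminant is $4k^2-4nk=4k(k-n)<0$.

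\emph{Equality.} By Theorem~\ref{thm:operator}, equality holds if and only if $\norm{T\bu_i}=c'\,SA_i^2$ for some $c'\ge0$. Using $\norm{T\bu_i}=SA_i\,\norm{(Q-\lambda I)\be_i}$, this becomes
\[
\norm{(Q-\lambda I)\be_i}=c'\,SA_i \quad\text{for all } i.
\]
This is the only step needing care: converting it to the stated form $SA_i=c\,\norm{(Q-\lambda I)\be_i}$ with $c>0$.
\begin{itemize}
\item If $c'>0$, put $c=1/c'$.
\item The case $c'=0$ must be excluded. It would force $(Q-\lambda I)\be_i=0$ for every $i$, hence $Q=\lambda I$ on a basis. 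That is impossible for $1\le k\le n-1$: it contradicts the positivity of $\norm{T}_{\HS}^2$ shown above.
\end{itemize}
Conversely, suppose $SA_i=c\,\norm{(Q-\lambda I)\be_i}$ with $c>0$. Then no norm $\norm{(Q-\lambda I)\be_i}$ can vanish, since $SA_i>0$, and reversing the rescaling gives the Cauchy--Schwarz proportionality with $c'=1/c$. This completes the characterisation in \eqref{eq:equalitycase}.
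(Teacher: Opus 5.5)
Your proof is correct and follows essentially the same route as the paper. You specialise the master identity using $M=I$, so that $\tr(QM)=k$, and then apply Theorem~\ref{thm:operator} to $T=Q-\lambda I$ with $\norm{T}_{\HS}^2=n\lambda^2-2k\lambda+k$ and $\sum_i SA_i^2=SP^2$. The paper leaves the conversion to the constant $c>0$ in \eqref{eq:equalitycase} implicit; you make it explicit by ruling out $c'=0$ through the strict positivity of $n\lambda^2-2k\lambda+k$ for $1\le k\le n-1$.
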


\begin{proof}
Equation \eqref{eq:identity} follows from Proposition~\ref{prop:master}. Applying
Theorem~\ref{thm:operator} to $T=Q-\lambda I$ gives
\[
\norm{T}_{\HS}^{2}
=
\tr\bigl((Q-\lambda I)^2\bigr)
=
k(1-\lambda)^2+(n-k)\lambda^2
=
n\lambda^2-2k\lambda+k.
\]
Since $\sum_iSA_i^2=SP^2$, \eqref{eq:inequality} and \eqref{eq:equalitycase} follow.
\end{proof}

\begin{corollary}\label{cor:sharp}
The constant in \eqref{eq:inequality} is best possible for every real $\lambda$. More
precisely, fix an orthonormal system of edge directions and a $k$-dimensional subspace $L$.
If $\lambda$ is different from both $0$ and $1$, then for every position of $L$ one can choose positive edge
lengths so that equality holds. If $\lambda=0$, equality is attainable for a given $L$
if and only if no edge direction is orthogonal to $L$; if $\lambda=1$, it is attainable if
and only if no edge direction lies in $L$.
\end{corollary}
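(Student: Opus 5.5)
The plan is to read the equality condition \eqref{eq:equalitycase} of Theorem~\ref{thm:right} as a prescription for the edge lengths. Fix the orthonormal directions $\be_1,\ldots,\be_n$ and the subspace $L$, and put
\[
d_i:=\norm{(Q-\lambda I)\be_i},\qquad d_i^2=\lambda^2+(1-2\lambda)\norm{Q\be_i}^2,
\]
which is the computation already used in Proposition~\ref{prop:master}. The constant $c$ must be positive and each edge length $SA_i=c\,d_i$ must be positive. So equality can be attained for the given $L$ and directions if and only if $d_i>0$ for every $i$. When this holds we take $SA_i=d_i$, that is, $c=1$. Conversely, if some $d_i=0$, then \eqref{eq:equalitycase} would force $SA_i=0$, which is impossible, so equality fails for every choice of positive lengths. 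Sharpness of the constant for every $\lambda$ then follows once we show that, for each $\lambda$, some $L$ has all $d_i>0$.

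The key step is to decide when $d_i=0$, i.e.\ when $Q\be_i=\lambda\be_i$. Because $Q$ is an orthogonal projection, its only eigenvalues are $0$ and $1$, and $\be_i\ne0$. Hence $d_i=0$ is possible only for $\lambda\in\{0,1\}$.

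For $\lambda\notin\{0,1\}$, every $d_i>0$ for every position of $L$, so equality is always attainable. For $\lambda=0$, $d_i=\norm{Q\be_i}$, which vanishes exactly when $\be_i\perp L$. This gives the stated criterion that no edge direction is orthogonal to $L$. For $\lambda=1$, $d_i=\norm{(I-Q)\be_i}$, which vanishes exactly when $\be_i\in L$. This gives the second criterion.

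For best possibility when $\lambda\in\{0,1\}$, we still need one $L$ meeting the criterion. Take $L$ spanned by $k$ orthonormal vectors, the first being $n^{-1/2}\sum_i\be_i$, with $1\le k\le n-1$. Then $\norm{Q\be_i}^2\ge 1/n>0$, so no $\be_i$ is orthogonal to $L$. Also $\norm{Q\be_i}^2<1$: if some $\be_i$ lay in $L$, then $\be_i$ and $n^{-1/2}\sum_j\be_j$ would both be in $L$, and one checks that a suitable choice of the remaining spanning vectors avoids this. The simplest such choice is $L=\mathrm{span}\{\bff_1,\ldots,\bff_k\}$ with the $\bff_j$ drawn from an orthonormal basis all of whose vectors have every coordinate nonzero in the $\be$-basis, for instance a normalized Hadamard-type or generic rotation of the $\be_i$. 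A generic $L$ works in any case, since both "$\be_i\perp L$" and "$\be_i\in L$" are proper closed conditions on the Grassmannian.

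The only real obstacle is this last existence step, and I would settle it by the genericity argument. For each $i$, the subspaces orthogonal to $\be_i$ and those containing $\be_i$ form proper subvarieties of the Grassmannian because $1\le k\le n-1$, and a finite union of proper subvarieties cannot cover it.
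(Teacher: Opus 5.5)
Your proof is correct and follows the paper's argument: you read \eqref{eq:equalitycase} as a prescription for the edge lengths, use that $0$ and $1$ are the only eigenvalues of $Q$, and identify when $\norm{Q\be_i}$ or $\norm{(I-Q)\be_i}$ vanishes. The only divergence is the existence step for $\lambda\in\{0,1\}$. There the paper takes two different explicit subspaces: $L\ni\bff$ for $\lambda=0$, and $L\subseteq\bff^\perp$ for $\lambda=1$, which works because no $\be_i$ lies in $\bff^\perp$, as $\langle\be_i,\bff\rangle=n^{-1/2}\ne0$. This avoids any Grassmannian argument. Your search for a single $L$ serving both cases is what forced genericity. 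That argument is valid once you note that the two bad sets are closed submanifolds of dimensions $k(n-1-k)$ and $(k-1)(n-k)$, both below $k(n-k)$, and hence are nowhere dense.
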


\begin{proof}
If $(Q-\lambda I)\be_i=0$, then $\lambda$ is an eigenvalue of the orthogonal projection
$Q$, hence $\lambda=0$ or $\lambda=1$. Thus, when $\lambda$ is different from both $0$ and $1$, all quantities on the
right of \eqref{eq:equalitycase} are positive and may be used as edge lengths.

For $\lambda=0$, the quantity $\norm{Q\be_i}$ vanishes exactly when $\be_i\perp L$; for
$\lambda=1$, the quantity $\norm{(I-Q)\be_i}$ vanishes exactly when $\be_i$ lies in $L$. The
stated conditions are therefore necessary and sufficient by \eqref{eq:equalitycase}.
To see that the exceptional cases are attainable, put
$\bff=n^{-1/2}(\be_1+\cdots+\be_n)$. For $\lambda=0$, take $L$ to contain $\bff$; then
$Q\be_i\ne0$ for every $i$. For $\lambda=1$, take $L\subseteq\bff^\perp$; then no $\be_i$
lies in $L$. Thus the constant is always sharp.
\end{proof}

\subsection{Special values of \texorpdfstring{$\lambda$}{lambda}}

\begin{corollary}[$\lambda=1$]\label{cor:lambda1}
Let $H_i$ be the orthogonal projection of $A_i$ onto a $k$-dimensional subspace through $S$.
Then
\begin{equation}\label{eq:lambda1}
\begin{aligned}
\sum_{i=1}^{n}\frac{A_iH_i^{2}}{SA_i^{2}}&=n-k,\\
\sum_{i=1}^{n}A_iH_i&\le\sqrt{n-k}\,SP.
\end{aligned}
\end{equation}
and the constant $\sqrt{n-k}$ is best possible. In particular, $k=1$ gives
\eqref{eq:motivating}, while $k=n-1$ gives $\sum_iA_iH_i\le SP$.
\end{corollary}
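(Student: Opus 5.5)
The plan is to specialise Theorem~\ref{thm:right} and Corollary~\ref{cor:sharp} to $\lambda=1$. When $\lambda=1$ we have $\overrightarrow{SB_i}=\bu_i$, so $B_i=A_i$. Substituting into the identity \eqref{eq:identity} gives
\[
\sum_{i=1}^{n}\frac{A_iH_i^{2}}{SA_i^{2}}=n-2k+k=n-k,
\]
and the inequality \eqref{eq:inequality} becomes $\sum_iA_iH_i\le\sqrt{n-k}\,SP$. As a check on the identity, one can also read it directly: $A_iH_i^2/SA_i^2=\norm{(I-Q)\be_i}^2$, and summing over the orthonormal basis gives $\tr(I-Q)=n-k$.

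Sharpness follows from Corollary~\ref{cor:sharp} in the case $\lambda=1$. Equality is attainable as soon as no edge direction $\be_i$ lies in $L$. Such an $L$ always exists for $1\le k\le n-1$: take $L\subseteq\bff^{\perp}$ with $\bff=n^{-1/2}\sum_i\be_i$. This is possible because $\dim\bff^\perp=n-1\ge k$. No $\be_i$ lies in $\bff^\perp$, since $\langle\be_i,\bff\rangle=n^{-1/2}\ne0$.

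For such an $L$, choose edge lengths $SA_i=\norm{(I-Q)\be_i}>0$. By \eqref{eq:equalitycase} with $c=1$, this gives equality, so the constant $\sqrt{n-k}$ cannot be lowered.

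The two special cases are immediate substitutions. Taking $k=1$ makes $L$ a line through $S$ and gives $\sqrt{n-1}$, which is \eqref{eq:motivating}. Taking $k=n-1$ gives the constant $1$.

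The only point needing care is the sharpness claim. Corollary~\ref{cor:sharp} shows equality can fail for particular $L$, for instance when $L$ contains an edge direction. So "best possible" must be read as optimal over all admissible configurations, and the construction of $L$ inside $\bff^\perp$ supplies the extremal example.
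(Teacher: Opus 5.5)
Your proposal is correct and matches the paper's argument, which is left implicit. The paper sets $\lambda=1$ in Theorem~\ref{thm:right}, so that $B_i=A_i$ and $n\lambda^2-2k\lambda+k=n-k$, and takes sharpness from the $\lambda=1$ case of Corollary~\ref{cor:sharp}, using $L\subseteq\bff^\perp$ and edge lengths $SA_i=\norm{(I-Q)\be_i}$ exactly as you do.
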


\begin{corollary}[$\lambda=0$]\label{cor:lambda0}
With the same notation,
\begin{equation}\label{eq:lambda0}
\begin{aligned}
\sum_{i=1}^{n}\frac{SH_i^{2}}{SA_i^{2}}&=k,\\
\sum_{i=1}^{n}SH_i&\le\sqrt{k}\,SP.
\end{aligned}
\end{equation}
and the constant $\sqrt{k}$ is best possible.
\end{corollary}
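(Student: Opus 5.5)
The plan is to specialise Theorem~\ref{thm:right} and Corollary~\ref{cor:sharp} to $\lambda=0$. When $\lambda=0$ the point $B_i$ coincides with $S$, so $B_iH_i=SH_i$. The identity \eqref{eq:identity} then reads
\[
\sum_{i=1}^{n}\frac{SH_i^{2}}{SA_i^{2}}=k.
\]
The same fact follows directly from Proposition~\ref{prop:master}: there $\lambda=0$ and $M=I$, so $\tr(QM)=\tr Q=k$. With the same specialisation, \eqref{eq:inequality} gives $\sum_i SH_i\le\sqrt{k}\,SP$. So the two displayed statements need no new computation.

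For sharpness I would invoke the $\lambda=0$ clause of Corollary~\ref{cor:sharp}. It says equality is attainable for a given $L$ exactly when no edge direction $\be_i$ is orthogonal to $L$, and the condition can always be met.

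To make this explicit, set $\bff=n^{-1/2}(\be_1+\cdots+\be_n)$ and choose $L$ to be any $k$-dimensional subspace containing $\bff$. Then
\[
\langle Q\be_i,\bff\rangle=\langle \be_i,\bff\rangle=n^{-1/2}\ne0,
\]
so $Q\be_i\ne0$ for every $i$. Now take the edge lengths $SA_i=\norm{Q\be_i}$, which is \eqref{eq:equalitycase} with $c=1$ and $\lambda=0$. By the equality case of Theorem~\ref{thm:right}, these lengths give $\sum_i SH_i=\sqrt{k}\,SP$. Hence no constant smaller than $\sqrt{k}$ can work.

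There is no real obstacle here. The only point needing care is that the edge lengths must be strictly positive, and this is exactly what the choice $L\ni\bff$ secures.
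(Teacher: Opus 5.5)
Your proposal is correct and follows the paper's own route: the corollary is the $\lambda=0$ specialisation of Theorem~\ref{thm:right}, where $B_i=S$, and sharpness comes from the $\lambda=0$ clause of Corollary~\ref{cor:sharp}. You also use the same choice of a subspace $L$ containing $\bff$ to make every $\norm{Q\be_i}$ positive, and you correctly obtain $\langle Q\be_i,\bff\rangle=\langle\be_i,Q\bff\rangle=n^{-1/2}$ from the self-adjointness of $Q$.
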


The two identities are complementary. Indeed,
$A_iH_i^2+SH_i^2=SA_i^2$ for each $i$. For $k=1$, \eqref{eq:lambda0} is the classical
relation $\sum_i\cos^2\alpha_i=1$ for direction cosines \cite[Ch.~18]{Coxeter}.

\begin{corollary}[Optimal parameter]\label{cor:optimal}
Since
\[
n\lambda^{2}-2k\lambda+k
=
n\Bigl(\lambda-\frac{k}{n}\Bigr)^2+\frac{k(n-k)}{n},
\]
the coefficient in \eqref{eq:inequality} is smallest for $\lambda=k/n$. Hence
\[
\sum_{i=1}^{n}B_iH_i
\le
\sqrt{\frac{k(n-k)}{n}}\,SP,
\]
with the constant best possible.
\end{corollary}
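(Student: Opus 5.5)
The plan is to minimise the coefficient in \eqref{eq:inequality} over $\lambda$ and then check that Corollary~\ref{cor:sharp} still gives sharpness at the minimising value. The completed square in the statement is routine: expand $n(\lambda-k/n)^2=n\lambda^2-2k\lambda+k^2/n$ and add $k(n-k)/n=k-k^2/n$ to recover $n\lambda^2-2k\lambda+k$. Since $n>0$, the quadratic is minimised exactly at $\lambda=k/n$, with minimum value $k(n-k)/n$. Because $1\le k\le n-1$, this value is positive, so its square root is a genuine positive constant.

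Next, substitute $\lambda=k/n$ into \eqref{eq:inequality} of Theorem~\ref{thm:right}. This gives
\[
\sum_{i=1}^{n}B_iH_i\le\sqrt{\tfrac{k(n-k)}{n}}\,SP,
\]
where $B_i$ is now defined by $\overrightarrow{SB_i}=\tfrac{k}{n}\bu_i$.

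For optimality, note that $0<k/n<1$, again because $1\le k\le n-1$. So $\lambda=k/n$ is different from both $0$ and $1$. Corollary~\ref{cor:sharp} then applies in its generic case: for any orthonormal edge directions and any position of $L$, the edge lengths
\[
SA_i=\norm{(Q-\tfrac{k}{n}I)\be_i}
\]
are all positive. By \eqref{eq:equalitycase}, this choice produces equality. Hence the constant cannot be decreased, for any $L$.

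There is no real obstacle here. The only point needing care is the observation that $k/n$ avoids the exceptional values $0$ and $1$, which is what makes the sharpness unconditional in $L$.
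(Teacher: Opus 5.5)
Your proposal is correct and follows the paper's own reasoning: complete the square, substitute $\lambda=k/n$ into \eqref{eq:inequality}, and take sharpness from Corollary~\ref{cor:sharp}. Your check that $0<k/n<1$ places you in the generic case of that corollary, so equality is attainable for every position of $L$. The paper needs less than this for sharpness, since Corollary~\ref{cor:sharp} already asserts the constant is best possible for every real $\lambda$.
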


\begin{remark}[The midpoint value]\label{rem:midpoint}
For $\lambda=\tfrac12$ the projection term in \eqref{eq:master} disappears and, in fact,
\[
\norm{(Q-\tfrac12 I)\be_i}^2=\frac14
\]
for every $i$, every subspace $L$, and every simplex. If $M_i$ is the midpoint of $SA_i$,
this is simply $M_iH_i=SA_i/2$. The associated inequality reduces to
\[
\sum_{i=1}^{n}SA_i\le\sqrt n\,SP,
\]
so in the linear setting $\lambda=\tfrac12$ carries no information about the projection.
It becomes nontrivial again for affine subspaces in Section~\ref{sec:affine}.
\end{remark}

\section{General simplices and the Gram spectrum}\label{sec:gram}

We now drop the orthogonality hypothesis. Let $\mu_1\ge\mu_2\ge\cdots\ge\mu_n>0$ be the
eigenvalues of $M$, equivalently of the Gram matrix $G$ of the unit edge vectors, and put
\[
\begin{aligned}
\sigma_k^{+}&=\mu_1+\cdots+\mu_k,\\
\sigma_k^{-}&=\mu_{n-k+1}+\cdots+\mu_n.
\end{aligned}
\]
By \eqref{eq:trM} we have $\sigma_n^{+}=\sigma_n^{-}=n$. The Gram matrix is the natural
carrier of metric information for a general simplex; see \cite{Fiedler} for a systematic
treatment.

\begin{theorem}\label{thm:kyfan}
Let $SA_1\ldots A_n$ be a simplex in $\R^{n}$ and let $L$ be a $k$-dimensional subspace
through $S$, $1\le k\le n-1$. Then
\begin{equation}\label{eq:kyfan}
\sigma_k^{-}\le\tr(QM)\le\sigma_k^{+},
\end{equation}
and consequently, if $\lambda<\tfrac12$,
\[
n\lambda^{2}+(1-2\lambda)\sigma_k^{-}
\le
\sum_{i=1}^{n}\frac{B_iH_i^{2}}{SA_i^{2}}
\le
n\lambda^{2}+(1-2\lambda)\sigma_k^{+}.
\]
If $\lambda=\tfrac12$, then
\[
\sum_{i=1}^{n}\frac{B_iH_i^{2}}{SA_i^{2}}=\frac n4.
\]
If $\lambda>\tfrac12$, then
\[
n\lambda^{2}+(1-2\lambda)\sigma_k^{+}
\le
\sum_{i=1}^{n}\frac{B_iH_i^{2}}{SA_i^{2}}
\le
n\lambda^{2}+(1-2\lambda)\sigma_k^{-}.
\]
Equality holds on the right of \eqref{eq:kyfan} if and only if $L$ is invariant under $M$
and the eigenvalues of $M|_L$ are $\mu_1,\ldots,\mu_k$ counted with multiplicity, and
similarly on the left with $\mu_{n-k+1},\ldots,\mu_n$; in particular both bounds are
attained for every simplex.
\end{theorem}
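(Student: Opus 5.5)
The plan is to reduce everything to the Ky Fan maximum principle applied to the symmetric positive definite matrix $M$.

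\textbf{Step 1 (the two-sided bound).} Choose an orthonormal basis $\bq_1,\ldots,\bq_k$ of $L$ and set $U=[\,\bq_1\ \cdots\ \bq_k\,]$, so that $Q=UU^{\T}$ and $U^{\T}U=I_k$. Then
\[
\tr(QM)=\tr(U^{\T}MU)=\sum_{j=1}^{k}\langle M\bq_j,\bq_j\rangle .
\]
Ky Fan's principle (see \cite{HornJohnson}) says that the maximum of $\tr(U^{\T}MU)$ over $n\times k$ matrices with orthonormal columns is $\sigma_k^{+}$, and the minimum is $\sigma_k^{-}$. This gives \eqref{eq:kyfan}.

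\textbf{Step 2 (bounds on the sum).} By Proposition~\ref{prop:master}, the sum equals $n\lambda^2+(1-2\lambda)\tr(QM)$. This is an affine function of $\tr(QM)$ with slope $1-2\lambda$. If $\lambda<\tfrac12$ the slope is positive, so the order of the bounds is preserved. If $\lambda>\tfrac12$ it is negative, so the bounds are reversed. At $\lambda=\tfrac12$ the value is $n/4$ identically, consistent with Remark~\ref{rem:midpoint}.

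\textbf{Step 3 (equality cases).} This is the only real work. I would prove it through the Cauchy interlacing theorem rather than quote it.

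Let $\nu_1\ge\cdots\ge\nu_k$ be the eigenvalues of the compression $U^{\T}MU$. Interlacing gives $\mu_{n-k+j}\le\nu_j\le\mu_j$ for each $j$. Hence $\tr(QM)=\sum_j\nu_j\le\sigma_k^{+}$, and equality forces $\nu_j=\mu_j$ for all $j$.

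Sufficiency of invariance is clear. If $ML\subseteq L$ and the spectrum of $M|_L$ is $\mu_1,\ldots,\mu_k$, then $\tr(QM)=\tr(M|_L)=\sigma_k^{+}$.

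For necessity, argue by induction on $k$ using Rayleigh quotients. Take a unit $\bq\in L$ with $\langle M\bq,\bq\rangle=\nu_1=\mu_1$. Since $\mu_1$ is the maximum of the Rayleigh quotient over all of $\R^n$, $\bq$ is an eigenvector of $M$ for $\mu_1$. Then pass to $L\cap\bq^{\perp}$, which lies in $\bq^{\perp}$, an $M$-invariant space on which $M$ has eigenvalues $\mu_2,\ldots,\mu_n$. Repeating, $L$ is spanned by eigenvectors of $M$ with eigenvalues $\mu_1,\ldots,\mu_k$, so $L$ is invariant with the stated spectrum.

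An alternative route to the same conclusion starts from $\tr(QM)=\sigma_k^{+}$. Write $\tr(QM)=\sum_i\mu_i\norm{Q\bw_i}^2$, where $\bw_i$ are orthonormal eigenvectors of $M$. The weights $\norm{Q\bw_i}^2$ lie in $[0,1]$ and sum to $k$. A majorization argument then forces the weight to sit on eigenspaces, with multiplicities handled carefully. I expect the multiplicity bookkeeping at a repeated eigenvalue $\mu_k=\mu_{k+1}$ to be the main obstacle, and the Rayleigh-quotient induction avoids it.

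The lower bound is identical, with $M$ replaced by $-M$ (or with minima in place of maxima). Attainment for every simplex follows by taking $L$ to be the span of eigenvectors for $\mu_1,\ldots,\mu_k$, respectively for $\mu_{n-k+1},\ldots,\mu_n$.
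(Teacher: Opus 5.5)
Your proposal is correct and follows the paper's proof. The Ky Fan principle gives $\sigma_k^{-}\le\tr(QM)\le\sigma_k^{+}$, and Proposition~\ref{prop:master} with the sign of $1-2\lambda$ gives the three cases. The only difference is that the paper cites \cite{Bhatia,HornJohnson} for the equality characterisation, whereas you prove it with Cauchy interlacing and a Rayleigh-quotient induction. That argument is sound: the ``repeating'' step works because $M\bq=\mu_1\bq$ splits the compression of $M$ to $L$ as $\mu_1$ plus the compression to $L\cap\bq^{\perp}$, whose eigenvalues are then $\mu_2,\ldots,\mu_k$. This makes your version self-contained.
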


\begin{proof}
The upper bound in \eqref{eq:kyfan} is the Ky Fan maximum principle \cite{Fan}:
\[
\max_{\dim L=k}\tr(P_LM)=\mu_1+\cdots+\mu_k.
\]
See \cite[Ch.~III]{Bhatia} or \cite[Ch.~4]{HornJohnson} for the equality case. Applying the
same principle to $-M$ gives the lower bound. The rest is Proposition~\ref{prop:master}.
\end{proof}

\begin{corollary}\label{cor:gramcases}
For every simplex and every $k$-dimensional $L$ through $S$,
\[
\begin{aligned}
\sigma_k^{-}&\le\sum_{i=1}^{n}\frac{SH_i^{2}}{SA_i^{2}}\le\sigma_k^{+},\\
n-\sigma_k^{+}&\le\sum_{i=1}^{n}\frac{A_iH_i^{2}}{SA_i^{2}}\le n-\sigma_k^{-}.
\end{aligned}
\]
and all four bounds are attained. Moreover
\[
\sum_{i=1}^{n}B_iH_i\le\sqrt{\,n\lambda^{2}+(1-2\lambda)\sigma_k^{\varepsilon}\,}\;\rho,
\]
where $\rho^{2}=\sum_i SA_i^{2}$, and $\varepsilon$ is $+$ when $\lambda<\tfrac12$ and $-$ when $\lambda>\tfrac12$.
\end{corollary}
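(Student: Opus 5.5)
The plan is to specialise Theorem~\ref{thm:kyfan} at $\lambda=0$ and $\lambda=1$, and then obtain the linear inequality from Cauchy--Schwarz. At $\lambda=0$ we have $B_i=S$, so $B_iH_i=SH_i$. Proposition~\ref{prop:master} then gives $\sum_i SH_i^2/SA_i^2=\tr(QM)$, and \eqref{eq:kyfan} yields the first pair of bounds directly.

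At $\lambda=1$ we have $B_i=A_i$, and the master identity gives $\sum_i A_iH_i^2/SA_i^2=n-\tr(QM)$. Reversing the inequalities in \eqref{eq:kyfan} gives $n-\sigma_k^{+}\le\cdot\le n-\sigma_k^{-}$. Alternatively, one can use the Pythagorean relation $A_iH_i^2+SH_i^2=SA_i^2$ noted after Corollary~\ref{cor:lambda0}.

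Attainment of all four bounds comes from the equality clause of Theorem~\ref{thm:kyfan}. Since $M$ is symmetric positive definite, take an orthonormal eigenbasis $\bq_1,\ldots,\bq_n$ with $M\bq_j=\mu_j\bq_j$. Then:
\begin{itemize}
\item $L=\operatorname{span}(\bq_1,\ldots,\bq_k)$ gives $\tr(QM)=\sigma_k^{+}$;
\item $L=\operatorname{span}(\bq_{n-k+1},\ldots,\bq_n)$ gives $\tr(QM)=\sigma_k^{-}$.
\end{itemize}
This can be checked by computing $\tr(QM)=\sum_{j\le k}\langle M\bq_j,\bq_j\rangle$ in each case. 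These two subspaces realise the upper and lower bounds in both lines.

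For the final inequality, I would write
\[
\sum_i B_iH_i=\sum_i \frac{B_iH_i}{SA_i}\,SA_i\le\Bigl(\sum_i\frac{B_iH_i^2}{SA_i^2}\Bigr)^{1/2}\rho
\]
by Cauchy--Schwarz. I would then bound the first factor by the appropriate upper estimate from Theorem~\ref{thm:kyfan}. For $\lambda<\tfrac12$, the coefficient $1-2\lambda$ is positive, so the upper bound uses $\sigma_k^{+}$. For $\lambda>\tfrac12$, the coefficient is negative, so the upper bound uses $\sigma_k^{-}$. This matches the choice of $\varepsilon$ in the statement.

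Nothing here is a real obstacle. The only care needed is the sign bookkeeping in $(1-2\lambda)$, which decides whether $\sigma^{+}$ or $\sigma^{-}$ appears. One should also note that $\lambda=\tfrac12$ is excluded from the statement, although there either choice gives the valid bound $\sqrt{n/4}\,\rho$.
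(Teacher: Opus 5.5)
Your proposal is correct and follows the route the paper intends. The paper gives no separate proof: the corollary is the case $\lambda=0$, $\lambda=1$ of Theorem~\ref{thm:kyfan}, with attainment coming from the eigenvector subspaces in its equality clause and the linear bound coming from the same Cauchy--Schwarz step as in Theorem~\ref{thm:operator}, and your sign bookkeeping for $1-2\lambda$ handles the choice of $\sigma_k^{\pm}$ exactly as required.
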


The following observation identifies exactly when the spectral interval collapses to the right-simplex value.

\begin{lemma}\label{lem:interval}
For $1\le k\le n-1$ we have $\sigma_k^{+}\ge k\ge\sigma_k^{-}$, and equality holds in either
inequality if and only if $\mu_1=\cdots=\mu_n=1$, that is, if and only if the simplex is
right at $S$.
\end{lemma}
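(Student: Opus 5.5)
The plan is to use only the ordering $\mu_1\ge\cdots\ge\mu_n$ and the trace condition $\sum_i\mu_i=n$ from \eqref{eq:trM}. The key observation is an averaging one. The mean of the $k$ largest eigenvalues is at least the mean of all $n$ eigenvalues, which equals $1$. Likewise the mean of the $k$ smallest eigenvalues is at most $1$.

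For the upper inequality, write $\sigma_k^{+}=\mu_1+\cdots+\mu_k$ and $n-\sigma_k^{+}=\mu_{k+1}+\cdots+\mu_n$. Each of the first $k$ terms is at least $\mu_k$, and each of the last $n-k$ terms is at most $\mu_k$. Hence
\[
\frac{\sigma_k^{+}}{k}\ge\mu_k\ge\frac{n-\sigma_k^{+}}{n-k}.
\]
Here we use $1\le k\le n-1$, so that both denominators are positive. Cross-multiplying gives $(n-k)\sigma_k^{+}\ge k(n-\sigma_k^{+})$, which is $\sigma_k^{+}\ge k$. The lower inequality follows in the same way, with $\sigma_k^{-}/k\le\mu_{n-k+1}\le(n-\sigma_k^{-})/(n-k)$. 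Alternatively, it follows from the upper inequality applied to the reversed ordering. One can also combine Theorem~\ref{thm:kyfan} with a single subspace: any $k$-dimensional $L$ satisfies $\sigma_k^-\le\tr(QM)\le\sigma_k^+$, but that route does not give $k$ directly, so the averaging argument is cleaner.

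For the equality case, suppose $\sigma_k^{+}=k$. Then both inequalities in the displayed chain are equalities. This forces $\mu_1=\cdots=\mu_k=\mu_k$ and $\mu_{k+1}=\cdots=\mu_n=\mu_k$, so all eigenvalues equal the common value $\mu_k$. The trace condition then gives $\mu_k=1$. The argument for $\sigma_k^{-}=k$ is symmetric. Conversely, if all $\mu_i=1$, then $\sigma_k^{\pm}=k$ trivially.

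It remains to identify $\mu_1=\cdots=\mu_n=1$ with rightness. Since $M$ is symmetric, it is diagonalisable, so this condition is equivalent to $M=I$. By the standing notation, $M=I$ holds if and only if $E$ is orthogonal, that is, if and only if the edges at $S$ are mutually perpendicular. There is no real obstacle here. The only point needing care is the strictness bookkeeping in the equality case, namely checking that the chain is tight at both ends. That tightness is exactly what forces all eigenvalues to be equal.
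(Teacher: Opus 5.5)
Your proof is correct and follows essentially the same route as the paper. Both arguments use the averaging chain $\sigma_k^{+}/k\ge\mu_k\ge(n-\sigma_k^{+})/(n-k)$ together with $\tr M=n$, let tightness of that chain force all eigenvalues to be equal, and identify $M=I$ with mutual orthogonality of the edges. The only cosmetic difference is that the paper gets the bound for $\sigma_k^{-}$ from the complement identity $\sigma_k^{-}=n-\sigma_{n-k}^{+}$ instead of repeating the averaging argument, and you already mention this as an alternative.
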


\begin{proof}
Put $\alpha=\sigma_k^{+}/k$ and $\beta=(n-\sigma_k^{+})/(n-k)$, the averages of the $k$
largest and of the $n-k$ smallest eigenvalues. Then
$\alpha\ge\mu_k\ge\mu_{k+1}\ge\beta$ and $k\alpha+(n-k)\beta=n$ by \eqref{eq:trM}, whence
$n\le n\alpha$ and $\alpha\ge1$, that is $\sigma_k^{+}\ge k$. If $\alpha=1$, then
$\beta=1$ as well, so $1=\alpha\ge\mu_k\ge\mu_{k+1}\ge\beta=1$; since $\alpha$ is the average
of $\mu_1,\ldots,\mu_k$, all of which are at least $\mu_k=1$, we get
$\mu_1=\cdots=\mu_k=1$, and symmetrically $\mu_{k+1}=\cdots=\mu_n=1$. The statement for
$\sigma_k^{-}$ follows from $\sigma_k^{-}=n-\sigma_{n-k}^{+}\le n-(n-k)=k$. Finally
$M=I$ is equivalent to $E$ being orthogonal, hence to the mutual orthogonality of the edges.
\end{proof}

\begin{remark}
Thus the interval $[\sigma_k^{-},\sigma_k^{+}]$ always contains $k$, and it collapses to the
single point $k$ exactly for right simplices. In this sense Theorem~\ref{thm:kyfan} is the
correct general statement: the identity of Theorem~\ref{thm:right} is the assertion that an
interval has length zero, and the converse results of the next section show that this happens only in the orthogonal case.
\end{remark}

\section{Converse results and quantitative estimates}\label{sec:converse}

Away from the midpoint value $\lambda=\tfrac12$, the right-simplex identity has a converse:
if it is independent of the projection subspace, then the edge directions must be orthogonal.
The same conclusion holds when one starts with arbitrary nonzero vectors rather than a
simplex.

\subsection{Characterisation of right simplices}

\begin{theorem}\label{thm:converse}
Let $\bu_1,\ldots,\bu_n$ be nonzero vectors in $\R^n$, let $1\le k\le n-1$ be fixed, and
let $\lambda\ne\tfrac12$ be fixed. The following are equivalent.
\begin{enumerate}
\item[\rm(i)] The vectors $\bu_1,\ldots,\bu_n$ are mutually perpendicular.
\item[\rm(ii)] For every $k$-dimensional subspace $L$ through $S$,
\[
\sum_{i=1}^{n}\frac{B_iH_i^2}{SA_i^2}=n\lambda^2-2k\lambda+k.
\]
\item[\rm(iii)] For every line $L$ through $S$,
\[
\sum_{i=1}^{n}\frac{SH_i^2}{SA_i^2}=1.
\]
\end{enumerate}
In particular, the vectors are then automatically linearly independent.
\end{theorem}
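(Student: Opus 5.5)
The plan is to route everything through the master identity, Proposition~\ref{prop:master}. This reduces each condition to a statement about $\tr(QM)$, and from there we recover $M=I$ by polarisation.

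For (i)$\Rightarrow$(ii) and (i)$\Rightarrow$(iii): if the $\bu_i$ are mutually perpendicular and nonzero, they form an orthogonal basis, so $M=I$ and $\tr(QM)=\tr Q=k$. Then (ii) is exactly Theorem~\ref{thm:right}. Statement (iii) is the case $\lambda=0$, $k=1$ of the master identity (Corollary~\ref{cor:lambda0}).

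For (ii)$\Rightarrow$(i): the master identity gives
\[
n\lambda^2+(1-2\lambda)\tr(QM)=n\lambda^2-2k\lambda+k .
\]
Since $\lambda\ne\tfrac12$, we may divide by $1-2\lambda$ and obtain $\tr(QM)=k$ for every $k$-dimensional subspace $L$. Hence, by Lemma~\ref{lem:interval}, it suffices to show $\sigma_k^+=k$.

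Take $L$ to be spanned by eigenvectors of $M$ for $\mu_1,\ldots,\mu_k$. The equality case of Theorem~\ref{thm:kyfan} gives $\tr(QM)=\sigma_k^+$, so $\sigma_k^+=k$, and Lemma~\ref{lem:interval} yields $M=I$.

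One subtlety needs care: Theorem~\ref{thm:kyfan} and Lemma~\ref{lem:interval} were stated for simplices, i.e.\ for independent vectors. In general $M=EE^{\T}$ is only positive semidefinite with trace $n$. However, the Ky Fan principle and the averaging argument of Lemma~\ref{lem:interval} use only the facts that $M$ is symmetric with $\tr M=n$, so they apply verbatim to the eigenvalues $\mu_1\ge\cdots\ge\mu_n\ge0$.

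A more self-contained alternative avoids this issue. Let $\bv,\bw$ be orthonormal and choose $L$ to contain $\bv$ and be orthogonal to $\bw$ (possible since $1\le k\le n-1$). Compare this $L$ with the subspace obtained by rotating $\bv\leftrightarrow\bw$. The difference of traces is $\langle M\bv,\bv\rangle-\langle M\bw,\bw\rangle=0$. So the quadratic form of $M$ is constant on unit vectors, $M=cI$, and $\tr M=n$ forces $c=1$.

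For (iii)$\Rightarrow$(i): with $Q=\bv\bv^{\T}$ for a unit vector $\bv$, the left side equals
\[
\sum_i\langle\be_i,\bv\rangle^2=\langle M\bv,\bv\rangle .
\]
So (iii) says $\langle M\bv,\bv\rangle=1$ for all unit $\bv$. Since $M$ is symmetric, polarisation gives $M=I$.

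Finally, $M=EE^{\T}=I$ means $E$ is orthogonal, so the $\be_i$ are orthonormal. This gives mutual perpendicularity, and it also gives linear independence automatically, which proves the last sentence of the theorem.

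The main obstacle is making sure the step from $\tr(QM)$ being constant over all $k$-dimensional subspaces to $M=I$ is airtight without assuming independence. I would present the rotation argument above as the primary route, since it needs only symmetry of $M$ and $\tr M=n$.
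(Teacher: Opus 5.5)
Your proposal is correct and is essentially the paper's proof. The paper likewise uses the master identity and $\lambda\ne\tfrac12$ to get $\tr(QM)=k$ for every $k$-dimensional $L$, compares two subspaces spanned by eigenvectors of $M$ that differ in one index (this is your $\bv\leftrightarrow\bw$ swap with $\bv,\bw$ taken to be eigenvectors, which also covers the $n=2$ case of your ``constant quadratic form'' step), and handles (iii) by the same quadratic-form argument. Your alternative through Lemma~\ref{lem:interval} with the single top-$k$ eigenspace is also valid, since that averaging argument uses only $\tr M=n$ and not positive definiteness.
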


\begin{proof}
The implications (i)$\Rightarrow$(ii) and (i)$\Rightarrow$(iii) follow from
Theorem~\ref{thm:right} and Corollary~\ref{cor:lambda0}.

For (ii)$\Rightarrow$(i), Proposition~\ref{prop:master} and
$\lambda\ne\tfrac12$ show that $\tr(QM)=k$ for every rank $k$ orthogonal projection $Q$.
Let $\mu_1,\ldots,\mu_n$ be the eigenvalues of $M$. Choosing $L$ to be spanned by any
$k$ eigenvectors gives that every sum of $k$ eigenvalues equals $k$. Comparing two such
sums that differ in one index shows that all $\mu_i$ are equal. Since $\tr M=n$, we obtain
$M=I$, hence the unit vectors $\be_i$ form an orthonormal basis.

For (iii)$\Rightarrow$(i), write $L=\R\bw$ with $\norm\bw=1$. Then
$\tr(QM)=\bw^{\T}M\bw$, so (iii) says $\bw^{\T}(M-I)\bw=0$ for every unit $\bw$. Since a
symmetric matrix is determined by its quadratic form, $M=I$.
\end{proof}

\subsection{How many subspaces detect orthogonality?}

The previous theorem uses all $k$-dimensional subspaces. In fact, only finitely many are needed, and the next result gives the exact minimum.

\begin{theorem}\label{prop:howmany}
Fix $1\le k\le n-1$ and put
\[
D=\dim\operatorname{Sym}_n(\R)=\frac{n(n+1)}2.
\]
The least integer $N$ for which there exist $k$-dimensional subspaces
$L_1,\ldots,L_N$ with the following property is
\[
N=D-1=\frac{n(n+1)}2-1:
\]
whenever $\be_1,\ldots,\be_n$ are unit vectors and
\[
\sum_{i=1}^{n}\norm{Q_j\be_i}^2=k \text{ for } 1\le j\le N,
\]
where $Q_j$ denotes the orthogonal projection onto $L_j$, the vectors
$\be_1,\ldots,\be_n$ are an orthonormal basis.
\end{theorem}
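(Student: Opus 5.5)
The plan is to linearise the condition. By Lemma~\ref{lem:trace}, the hypothesis reads $\tr(Q_jM)=k$. Since $\tr Q_j=k$, this is equivalent to
\[
\langle Q_j,X\rangle=\tr(Q_jX)=0 \quad\text{for all } j,\qquad X:=M-I .
\]
By \eqref{eq:trM}, $\tr X=\langle I,X\rangle=0$ holds automatically. The conclusion ``orthonormal basis'' is equivalent to $M=I$, i.e.\ $X=0$. So the question becomes one about the span, in $\operatorname{Sym}_n(\R)$ with the trace inner product, of $I$ together with the $Q_j$.

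For sufficiency, with $N=D-1$, I would first show that the rank-$k$ orthogonal projections span $\operatorname{Sym}_n(\R)$ when $1\le k\le n-1$. The diagonal coordinate projections $P_S$ with $|S|=k$ span the diagonal matrices. Indeed, differences $P_{S\cup\{a\}}-P_{S\cup\{b\}}$ give $E_{aa}-E_{bb}$, and together with $\sum_S P_S$, a multiple of $I$, these yield every diagonal matrix. For an off-diagonal direction, take $P_S$ where $S$ contains $a$ but not $b$, and replace $\be_a$ by $(\be_a\pm\be_b)/\sqrt2$. The difference of the two resulting projections is $E_{ab}+E_{ba}$. Conjugation invariance gives the rest. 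Since $I$ lies in this span, we can extend $\{I\}$ to a basis of $\operatorname{Sym}_n(\R)$ using $D-1$ rank-$k$ projections $Q_1,\dots,Q_{D-1}$. Then $X$ is orthogonal to a spanning set, so $X=0$ and $M=I$.

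For necessity, suppose $N\le D-2$. The conditions $\langle Q_j,X\rangle=0$ and $\langle I,X\rangle=0$ are at most $D-1$ linear constraints on the $D$-dimensional space $\operatorname{Sym}_n(\R)$, so there is a nonzero admissible $X$. Put $M_\varepsilon=I+\varepsilon X$ with $\varepsilon$ small. Then $M_\varepsilon$ is positive definite, $\tr M_\varepsilon=n$, $M_\varepsilon\ne I$, and $\tr(Q_jM_\varepsilon)=k$ for every $j$. The step I expect to be the main obstacle is realising $M_\varepsilon$ as the frame operator $EE^{\T}$ of \emph{unit} vectors.

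I would handle it with the Schur--Horn theorem. The constant vector $(1,\dots,1)$ is majorised by any spectrum $\mu$ with $\sum\mu_i=n$. Hence there is a symmetric $G$ with all diagonal entries $1$ and the same spectrum as $M_\varepsilon$. Write $G=V^{\T}\Delta V$ and $M_\varepsilon=U^{\T}\Delta U$ with $U,V$ orthogonal, and set $E=U^{\T}\Delta^{1/2}V$. Then $E^{\T}E=G$, so the columns of $E$ are unit vectors, and $EE^{\T}=M_\varepsilon\ne I$. These columns satisfy all $N$ conditions but are not orthonormal, so no family of $N\le D-2$ subspaces suffices, and the minimum is exactly $D-1$.
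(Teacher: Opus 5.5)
Your proof is correct and follows the paper's argument. It uses the same linearisation $\tr(Q_jX)=0$ with $X=M-I$, the same dimension count followed by a Schur--Horn realisation of $I+\varepsilon X$ as the frame operator of unit vectors for necessity, and a spanning argument for sufficiency. The only difference is that you show rank-$k$ projections span $\operatorname{Sym}_n(\R)$ constructively (coordinate projections for the diagonal, $\pi/4$-rotated ones for $E_{ab}+E_{ba}$), whereas the paper argues dually, testing a trace-zero $X$ orthogonal to every $Q_L-\tfrac{k}{n}I$ on eigenvector subspaces to force $X=0$; either works.
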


\begin{proof}
Let $\operatorname{Sym}_0(n)$ denote the vector space of real symmetric $n\times n$ matrices with trace zero. Its dimension is $D-1$.

First suppose $N\le D-2$. The conditions $\tr(Q_jX)=0$ impose at most $N$ linear
conditions on $\operatorname{Sym}_0(n)$, so there is a nonzero matrix $X$ in $\operatorname{Sym}_0(n)$ that is orthogonal to all $Q_j$. For sufficiently small $t>0$,
$M_t=I+tX$ is positive definite, has trace $n$, and satisfies
$\tr(Q_jM_t)=k$ for every $j$. The eigenvalues of $M_t$ have sum $n$, so they majorise
$(1,\ldots,1)$. By the Schur--Horn theorem \cite{AHorn,HornJohnson}, there is an orthogonal
matrix $U$ such that $U^{\T}M_tU$ has diagonal entries all equal to $1$. The columns of
$M_t^{1/2}U$ are therefore unit vectors with frame operator $M_t$. Since $M_t\ne I$, they
are not pairwise orthogonal. Thus fewer than $D-1$ subspaces cannot suffice.

For the converse, consider the traceless matrices
\[
Q_L-\frac{k}{n}I,
\]
where $L$ ranges over all $k$-dimensional subspaces. They span $\operatorname{Sym}_0(n)$.
Indeed, suppose that $X$ is a trace-zero symmetric matrix orthogonal to all of them. Then
$\tr(Q_LX)=0$ for every $L$. Diagonalising $X$, with eigenvalues
$x_1,\ldots,x_n$, and taking $L$ to be spanned by any $k$ eigenvectors gives
For every $k$-element subset $J=\{j_1,\ldots,j_k\}$ we then have
\[
\sum_{r=1}^{k}x_{j_r}=0.
\]
Two such subsets differing in one index show that all $x_i$ are equal, and $\tr X=0$ then gives $X=0$. Hence one may choose $D-1$ subspaces
$L_1,\ldots,L_{D-1}$ such that the matrices
$Q_j-kI/n$ form a basis of $\operatorname{Sym}_0(n)$.

If the stated projection identities hold for these subspaces and $M$ is the frame operator
of the unit vectors, then $X=M-I$ is trace-zero and symmetric, and it is orthogonal to
that basis. Thus $X=0$ and $M=I$.
\end{proof}

For $k=1$, one may choose an explicit detecting family: the directions
$\varepsilon_1,\ldots,\varepsilon_{n-1}$ and
$(\varepsilon_i+\varepsilon_j)/\sqrt2$ for $1\le i<j\le n$, where
$\varepsilon_1,\ldots,\varepsilon_n$ is any fixed orthonormal basis.

\subsection{A sharp quantitative estimate}

For a simplex $SA_1\ldots A_n$, define
\[
\Theta:=\Bigl(\sum_{i\ne j}\cos^2\theta_{ij}\Bigr)^{1/2}
\]
and, for a line $L$ through $S$,
\[
\Delta(L)=\sum_{i=1}^{n}\frac{SH_i^2}{SA_i^2}-1.
\]

\begin{theorem}\label{thm:quantitative}
Put
\[
c_n=
\begin{cases}
1/\sqrt n,& n\text{ even},\\[2pt]
1/\sqrt{n-1},& n\text{ odd}.
\end{cases}
\]
Then
\begin{equation}\label{eq:quantitative}
c_n\Theta
\le
\max_L|\Delta(L)|
\le
\sqrt{\frac{n-1}{n}}\,\Theta,
\end{equation}
where the maximum is over all lines through $S$. Both constants are best possible.
\end{theorem}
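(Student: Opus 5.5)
The plan is to reduce everything to a statement about the spectrum of the trace-zero matrix $X:=M-I$, and then to a finite-dimensional vector inequality.

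\emph{Reduction.} For a line $L=\R\bw$ with $\norm{\bw}=1$, Lemma~\ref{lem:trace} gives $\sum_i SH_i^2/SA_i^2=\tr(QM)=\bw^{\T}M\bw$. Hence $\Delta(L)=\bw^{\T}X\bw$, and $\max_L|\Delta(L)|=\norm{X}_{\mathrm{op}}=\max_i|x_i|$, where $x_i=\mu_i-1$ are the eigenvalues of $X$. On the other side, $G-I$ has zero diagonal and off-diagonal entries $\cos\theta_{ij}$. So $\Theta^2=\norm{G-I}_{\HS}^2$. Since $G$ and $M$ have the same spectrum, this equals $\sum_i(\mu_i-1)^2=\norm{x}_2^2$. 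By \eqref{eq:trM} we have $\sum_i x_i=0$. Thus \eqref{eq:quantitative} becomes
\[
c_n\norm{x}_2\le\norm{x}_\infty\le\sqrt{\tfrac{n-1}{n}}\,\norm{x}_2
\]
for real vectors $x$ with coordinate sum zero (and $x_i>-1$).

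\emph{Upper bound.} Let $|x_1|=\norm{x}_\infty$. Then $\sum_{j\ge2}x_j=-x_1$, and Cauchy--Schwarz gives $x_1^2\le(n-1)\sum_{j\ge2}x_j^2$. Hence $\norm{x}_2^2\ge x_1^2\bigl(1+\tfrac1{n-1}\bigr)=\tfrac{n}{n-1}x_1^2$, which is the right-hand inequality. Equality occurs for $x=t(n-1,-1,\ldots,-1)$.

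\emph{Lower bound.} Normalise $\norm{x}_\infty=1$. Then we must maximise the convex function $\sum x_i^2$ over the polytope $\{x\in[-1,1]^n:\sum x_i=0\}$. The maximum is attained at a vertex. A vertex of a cube cut by one hyperplane has at most one coordinate not equal to $\pm1$.
\begin{itemize}
\item If $n$ is even, the maximum is $n$, attained when half the coordinates are $+1$ and half are $-1$.
\item If $n$ is odd, a sum of $n$ signs is odd and cannot vanish. So some coordinate is not $\pm1$, and the zero-sum condition forces it to be an integer in $(-1,1)$, namely $0$. The maximum is then $n-1$.
\end{itemize}
This gives $\norm{x}_2^2\le c_n^{-2}\norm{x}_\infty^2$, with equality at $t(1,-1,\ldots)$ in the even case and $t(1,-1,\ldots,1,-1,0)$ in the odd case.

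\emph{Sharpness.} This is the step needing care: the extremal $x$ must be realised by an actual simplex. Take the extremal vector scaled by a small $t>0$ and put $M_t=\mathrm{diag}(1+tx_i)$. This matrix is positive definite with trace $n$. As in the proof of Theorem~\ref{prop:howmany}, the Schur--Horn theorem supplies an orthogonal $U$ for which $U^{\T}M_tU$ has unit diagonal. The columns of $M_t^{1/2}U$ are then unit vectors with frame operator $M_t$. They are linearly independent because $M_t$ is positive definite, so they are the unit edge directions of a simplex. Both sides of \eqref{eq:quantitative} scale linearly in $t$, so equality is attained in each bound and both constants are best possible.
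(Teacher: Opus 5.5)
Your proof is correct and follows the paper's route. You reduce to the trace-zero symmetric matrix $X=M-I$, so that $\max_L|\Delta(L)|=\norm{X}_{\mathrm{op}}$ and $\Theta=\norm{X}_{\HS}$, apply Cauchy--Schwarz to the remaining eigenvalues for the upper bound, and realise the extremal spectra by the Schur--Horn construction. The only local difference is the odd-$n$ lower bound: you maximise the convex function $\sum_i x_i^2$ over the vertices of $[-1,1]^n\cap\{\sum_i x_i=0\}$, whereas the paper counts sign classes to get $\sum_i x_i^2\le\sum_i|x_i|\le 2r=n-1$; both arguments are valid.
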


\begin{proof}
Put $X=M-I$. For $L=\R\bw$ with $\norm\bw=1$,
\[
\Delta(L)=\bw^{\T}X\bw,
\]
so
\[
\max_L|\Delta(L)|=\norm{X}_{\mathrm{op}}.
\]
Moreover,
\[
\norm{X}_{\HS}^2
=
\tr(M^2)-n
=
\tr(G^2)-n
=
\sum_{i\ne j}\cos^2\theta_{ij}
=
\Theta^2.
\]
Thus it remains to compare the operator and Hilbert--Schmidt norms of a nonzero symmetric
matrix with trace zero.

Let $x_1,\ldots,x_n$ be the eigenvalues of $X$ and put
$m=\max_i|x_i|$. For the upper bound in \eqref{eq:quantitative}, assume after changing
sign that $x_1=m$. Since $\sum_{i=2}^{n}x_i=-m$, Cauchy--Schwarz gives
\[
\sum_{i=2}^{n}x_i^2\ge\frac{m^2}{n-1}.
\]
Hence
\[
\norm{X}_{\HS}^2\ge\frac{n}{n-1}m^2,
\]
which is the required upper estimate. Equality occurs for the spectrum
\[
\Bigl(m,-\frac{m}{n-1},\ldots,-\frac{m}{n-1}\Bigr).
\]

For the lower bound, scale so that $m=1$. If $n$ is even, then simply
$\sum_i x_i^2\le n$, giving $m\ge\norm{X}_{\HS}/\sqrt n$; equality occurs when half of the
eigenvalues are $1$ and half are $-1$. If $n=2r+1$ is odd, write the positive and negative eigenvalues separately. One of the two
sign classes contains at most $r$ terms. Since $|x_i|\le1$ and the positive and negative
sums have the same absolute value, that common sum is at most $r$. Hence
\[
\sum_i x_i^2\le\sum_i|x_i|\le2r=n-1.
\]
This gives $m\ge\norm{X}_{\HS}/\sqrt{n-1}$, with equality for the spectrum consisting of
$r$ copies of $1$, $r$ copies of $-1$, and one $0$.

These equality patterns are realised by simplices as well. For each of
the trace-zero spectra above, choose a symmetric matrix $Y$ with that spectrum. For
sufficiently small $\varepsilon>0$, $I+\varepsilon Y$ is positive definite with trace $n$.
The same Schur--Horn argument used in Theorem~\ref{prop:howmany} produces unit edge vectors
with frame operator $I+\varepsilon Y$. Since both ratios in \eqref{eq:quantitative} are
scale invariant in $Y$, the constants are sharp.
\end{proof}

\section{Affine projection subspaces}\label{sec:affine}

We now allow the projection subspace to miss the vertex $S$. Let
\[
L'=\mathbf p+L_0
\]
be a $k$-dimensional affine subspace, let $Q$ be the orthogonal projection onto its
direction space $L_0$, and let $F$ be the foot of the perpendicular from $S$ to $L'$. Put
\[
\begin{aligned}
\bq&=\overrightarrow{SF}=(I-Q)\mathbf p,\\
d&=\norm{\bq}=\dist(S,L').
\end{aligned}
\]
and define
\[
\bz=\sum_{i=1}^{n}\frac{\bu_i}{SA_i^2}.
\]
If $H_i$ is the orthogonal projection of $A_i$ onto $L'$, then
$\overrightarrow{SH_i}=Q\bu_i+\bq$.

\begin{lemma}\label{lem:affinegeneral}
For every simplex, every affine $k$-dimensional subspace $L'$, and every real $\lambda$,
\begin{equation}\label{eq:affinegeneral}
\sum_{i=1}^{n}\frac{B_iH_i^2}{SA_i^2}
=
n\lambda^2+(1-2\lambda)\tr(QM)
-2\lambda\langle\bz,\bq\rangle
+d^2\sum_{i=1}^{n}\frac1{SA_i^2}.
\end{equation}
\end{lemma}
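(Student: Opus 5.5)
We reduce to the linear computation of Proposition~\ref{prop:master} by separating the affine offset. Since $\overrightarrow{SH_i}=Q\bu_i+\bq$ and $\overrightarrow{SB_i}=\lambda\bu_i$, we have
\[
\overrightarrow{B_iH_i}=(Q-\lambda I)\bu_i+\bq .
\]
Dividing by $SA_i=a_i$ and expanding the square gives
\[
\frac{B_iH_i^2}{SA_i^2}
=\norm{(Q-\lambda I)\be_i}^2+\frac{2}{a_i}\langle (Q-\lambda I)\be_i,\bq\rangle+\frac{d^2}{a_i^2}.
\]
The first term is handled exactly as in the proof of Proposition~\ref{prop:master}: it equals $\lambda^2+(1-2\lambda)\norm{Q\be_i}^2$. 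Summing these over $i$ and applying Lemma~\ref{lem:trace} produces $n\lambda^2+(1-2\lambda)\tr(QM)$. The last term sums directly to $d^2\sum_i SA_i^{-2}$.

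The cross term is the only place needing care, and it is easy. Because $\bq=(I-Q)\mathbf p$ lies in $L_0^\perp$ and $Q$ is self-adjoint, we get $\langle Q\be_i,\bq\rangle=\langle \be_i,Q\bq\rangle=0$. The cross term therefore reduces to
\[
-\frac{2\lambda}{a_i}\langle \be_i,\bq\rangle=-\frac{2\lambda}{a_i^2}\langle\bu_i,\bq\rangle .
\]
Summing over $i$ gives $-2\lambda\langle\bz,\bq\rangle$ by the definition of $\bz$.

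Adding the three sums yields \eqref{eq:affinegeneral}. The argument uses no linear independence, so it holds for arbitrary nonzero $\bu_i$ as well as for simplices. The main point to verify is the vanishing of $\langle Q\bu_i,\bq\rangle$, which relies on $F$ being the foot of the perpendicular, so that $\bq\perp L_0$.
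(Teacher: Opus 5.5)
Your proposal is correct and follows essentially the same route as the paper. You write $\overrightarrow{B_iH_i}=(Q-\lambda I)\bu_i+\bq$, expand the square, use $\bq\perp L_0$ to remove $\langle Q\bu_i,\bq\rangle$, and sum with Proposition~\ref{prop:master}; the only cosmetic difference is that you divide by $SA_i$ before expanding rather than after.
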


\begin{proof}
Since $\bq\perp L_0$,
\[
\overrightarrow{B_iH_i}=(Q-\lambda I)\bu_i+\bq
\]
and
\[
B_iH_i^2
=
\norm{(Q-\lambda I)\bu_i}^2
-2\lambda\langle\bu_i,\bq\rangle+d^2.
\]
Divide by $SA_i^2$, sum, and use Proposition~\ref{prop:master}.
\end{proof}

For a right simplex the two correction terms have a direct geometric meaning. Let $h$ be
the altitude from $S$ to the hyperplane of the opposite facet $A_1\ldots A_n$.

\begin{lemma}\label{lem:altitude}
If the simplex is right at $S$, then $\langle\bz,\bu_j\rangle=1$ for every $j$, and
\[
\norm{\bz}^2=\sum_{i=1}^{n}\frac1{SA_i^2}=\frac1{h^2}.
\]
Thus $\bz$ is a normal vector to the opposite facet, normalised by
$\langle\bz,\mathbf x\rangle=1$ on that facet.
\end{lemma}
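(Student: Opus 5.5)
Since the simplex is right at $S$, the unit vectors $\be_1,\ldots,\be_n$ form an orthonormal basis. I will therefore expand everything in that basis and then identify the geometric meaning at the end.

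First, for each $j$ I compute
\[
\langle\bz,\bu_j\rangle=\sum_{i=1}^{n}\frac{\langle\bu_i,\bu_j\rangle}{SA_i^2}=\frac{\langle\bu_j,\bu_j\rangle}{SA_j^2}=1,
\]
because the cross terms vanish by orthogonality. Next, writing $\bz=\sum_i \be_i/SA_i$, Parseval gives $\norm{\bz}^2=\sum_i 1/SA_i^2$.

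For the geometric part, the affine hull of the facet $A_1\ldots A_n$ consists of the points $\mathbf x=\sum_j t_j\bu_j$ with $\sum_j t_j=1$. Since the $\bu_j$ form a basis, every such point satisfies
\[
\langle\bz,\mathbf x\rangle=\sum_j t_j\langle\bz,\bu_j\rangle=\sum_j t_j=1.
\]
So the facet hyperplane is contained in the hyperplane $\{\mathbf x:\langle\bz,\mathbf x\rangle=1\}$. Both are hyperplanes, and $\bz\neq0$, so the two coincide. Hence $\bz$ is a normal vector to the facet, normalised as claimed.

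The distance from the origin $S$ to the hyperplane $\{\langle\bz,\mathbf x\rangle=1\}$ is $1/\norm{\bz}$. This gives $h=1/\norm{\bz}$, that is $\norm{\bz}^2=1/h^2$, which recovers the classical relation $1/h^2=\sum_i 1/SA_i^2$.

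I do not expect any real obstacle here. The only point needing care is the dimension count: the facet's affine hull is a hyperplane because the $A_i$ are affinely independent, and this is exactly what lets me identify it with the level set rather than merely include it in the level set.
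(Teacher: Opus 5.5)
Your proof is correct and follows the paper's own argument. Orthogonality gives $\langle\bz,\bu_j\rangle=1$, so the facet lies in the hyperplane $\{\mathbf x:\langle\bz,\mathbf x\rangle=1\}$, which is at distance $1/\norm{\bz}$ from $S$. You also spell out two points the paper leaves implicit: the Parseval computation $\norm{\bz}^2=\sum_i 1/SA_i^2$, and the dimension count that turns the inclusion of the facet's hyperplane into equality, which is what justifies $h=1/\norm{\bz}$.
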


\begin{proof}
Orthogonality gives
\[
\langle\bz,\bu_j\rangle
=
\sum_i\frac{\langle\bu_i,\bu_j\rangle}{SA_i^2}=1.
\]
Hence the opposite facet lies in $\{\mathbf x:\langle\bz,\mathbf x\rangle=1\}$, whose
distance from the origin is $1/\norm\bz$. The displayed identity follows.
\end{proof}

The relation $h^{-2}=\sum_iSA_i^{-2}$ is the classical altitude formula for right
simplices; in dimension three it is the familiar companion to de Gua's theorem.

\begin{theorem}\label{thm:affine}
Let $SA_1\ldots A_n$ be right at $S$, let $L'$ be a $k$-dimensional affine subspace with
$1\le k\le n-1$, let $d=\dist(S,L')$, and let $\varphi$ be the angle between
$\overrightarrow{SF}$ and $\bz$ when $d>0$. Then, for every real $\lambda$,
\begin{equation}\label{eq:affine}
\sum_{i=1}^{n}\frac{B_iH_i^2}{SA_i^2}
=
n\lambda^2-2k\lambda+k
+\frac{d^2}{h^2}
-2\lambda\frac{d}{h}\cos\varphi.
\end{equation}
\end{theorem}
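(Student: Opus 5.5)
The plan is to specialise Lemma~\ref{lem:affinegeneral} to the right case and then read off the two correction terms from Lemma~\ref{lem:altitude}.

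First, since the simplex is right at $S$, we have $M=I$, so $\tr(QM)=\tr Q=k$. The first two terms of \eqref{eq:affinegeneral} then become $n\lambda^2+(1-2\lambda)k=n\lambda^2-2k\lambda+k$, exactly as in Theorem~\ref{thm:right}.

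Next, Lemma~\ref{lem:altitude} gives $\sum_i SA_i^{-2}=1/h^2$. The last term $d^2\sum_i SA_i^{-2}$ is therefore $d^2/h^2$.

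For the cross term, we handle two cases.
\begin{itemize}
\item If $d>0$, write $\langle\bz,\bq\rangle=\norm{\bz}\,\norm{\bq}\cos\varphi$. Here $\bq=\overrightarrow{SF}$ and $\varphi$ is by definition the angle between $\overrightarrow{SF}$ and $\bz$. Using $\norm{\bz}=1/h$ from Lemma~\ref{lem:altitude} and $\norm{\bq}=d$, we get $\langle\bz,\bq\rangle=(d/h)\cos\varphi$. Hence $-2\lambda\langle\bz,\bq\rangle=-2\lambda(d/h)\cos\varphi$.
\item If $d=0$, then $\bq=0$ and both correction terms vanish. The formula \eqref{eq:affine} then holds with any convention for $\varphi$, because the product $d\cos\varphi$ is zero.
\end{itemize}
Collecting the three pieces gives \eqref{eq:affine}.

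No step is a genuine obstacle. The only points needing care are the degenerate case $d=0$, where $\varphi$ is undefined, and the correct identification of $\bq$ with $\overrightarrow{SF}$, including the sign of $\bq$ in the cross term. The identification $\overrightarrow{SH_i}=Q\bu_i+\bq$ was already established before Lemma~\ref{lem:affinegeneral}, so it is taken as given.
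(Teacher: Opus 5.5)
Your proposal is correct and follows the paper's proof exactly: you specialise Lemma~\ref{lem:affinegeneral} with $M=I$, then use Lemma~\ref{lem:altitude} to get $\sum_i SA_i^{-2}=h^{-2}$ and $\norm{\bz}=1/h$, which gives $\langle\bz,\bq\rangle=(d/h)\cos\varphi$. Your explicit handling of the degenerate case $d=0$, which the paper leaves implicit, is a sensible addition.
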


\begin{proof}
In Lemma~\ref{lem:affinegeneral} use $M=I$, Lemma~\ref{lem:altitude}, and
$\langle\bz,\bq\rangle=(d/h)\cos\varphi$.
\end{proof}

\begin{corollary}[$\lambda=0$]\label{cor:affinezero}
For every affine $k$-dimensional $L'$ at distance $d$ from $S$,
\[
\sum_{i=1}^{n}\frac{SH_i^2}{SA_i^2}=k+\frac{d^2}{h^2}.
\]
\end{corollary}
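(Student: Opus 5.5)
The plan is to specialise Theorem~\ref{thm:affine} to $\lambda=0$. I first check that nothing in that theorem excludes this value. With $\lambda=0$ the factor $n\lambda^2-2k\lambda+k$ reduces to $k$. The cross term $-2\lambda(d/h)\cos\varphi$ vanishes, which matters because it means the undefined angle $\varphi$ at $d=0$ causes no trouble. Finally, $B_i=S$, since $\overrightarrow{SB_i}=0\cdot\bu_i$, so $B_iH_i=SH_i$. Substituting these into \eqref{eq:affine} gives $\sum_i SH_i^2/SA_i^2=k+d^2/h^2$.

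To avoid depending on the case $d>0$ in Theorem~\ref{thm:affine}, I would instead argue directly from Lemma~\ref{lem:affinegeneral} with $\lambda=0$. That lemma gives
\[
\sum_i\frac{SH_i^2}{SA_i^2}=\tr(QM)+d^2\sum_i\frac1{SA_i^2}.
\]
For a right simplex we have $M=I$, so $\tr(QM)=\tr Q=k$. Lemma~\ref{lem:altitude} identifies $\sum_i SA_i^{-2}$ with $h^{-2}$. Together these give the claim uniformly in $d\ge0$, and in particular $d=0$ recovers Corollary~\ref{cor:lambda0}.

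There is no real obstacle here. The only point needing care is the degenerate case $d=0$, where $\varphi$ is undefined. Routing the argument through Lemma~\ref{lem:affinegeneral}, where the term $\langle\bz,\bq\rangle$ appears only with coefficient $-2\lambda=0$, removes that issue.
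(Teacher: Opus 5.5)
Your proposal is correct and takes the paper's route: the corollary is the $\lambda=0$ case of Theorem~\ref{thm:affine}, which itself comes from Lemma~\ref{lem:affinegeneral} with $M=I$ together with Lemma~\ref{lem:altitude}. Going through Lemma~\ref{lem:affinegeneral} to sidestep the undefined angle $\varphi$ at $d=0$ is reasonable extra care, though the cross term carries the factor $\lambda d$ and so vanishes there anyway.
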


\begin{corollary}\label{cor:affinelower}
For every affine $k$-dimensional subspace $L'$ and every real $\lambda$,
\[
\sum_{i=1}^{n}\frac{B_iH_i^2}{SA_i^2}
\ge
(n-1)\lambda^2-2k\lambda+k.
\]
The constant is best possible. If $\lambda\ne0$, equality holds if and only if
\[
\begin{aligned}
\bz&\perp L_0,\\
\overrightarrow{SF}&=\lambda h^2\bz.
\end{aligned}
\]
If $\lambda=0$, equality holds if and only if $L'$ passes through $S$. In particular, for $\lambda=1$,
\[
\sum_{i=1}^{n}\frac{A_iH_i^2}{SA_i^2}\ge n-k-1.
\]
\end{corollary}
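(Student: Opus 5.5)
We start from the identity \eqref{eq:affine} of Theorem~\ref{thm:affine}. Put $t=d/h\ge0$ and $c=\cos\varphi\in[-1,1]$; when $d=0$ the correction terms vanish. The right-hand side then reads
\[
n\lambda^2-2k\lambda+k+t^2-2\lambda tc .
\]
Since $t\ge0$ and $|c|\le1$, we have $-2\lambda tc\ge-2|\lambda|t$. Completing the square gives
\[
t^2-2|\lambda|t=(t-|\lambda|)^2-\lambda^2\ge-\lambda^2 .
\]
Adding $n\lambda^2-2k\lambda+k$ produces the lower bound $(n-1)\lambda^2-2k\lambda+k$. The $\lambda=1$ specialisation is immediate.

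For the equality case with $\lambda\ne0$, equality requires both $t=|\lambda|$ and $\lambda c=|\lambda|$, so $c=\operatorname{sign}\lambda$. Together these say that $\bq=\overrightarrow{SF}$ is parallel to $\bz$, with $\norm{\bq}=|\lambda|h$ and $\langle\bq,\bz\rangle=\lambda h\norm{\bz}=\lambda$. Using $\norm{\bz}=1/h$ from Lemma~\ref{lem:altitude}, this is equivalent to $\bq=\lambda h^2\bz$.

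Because $\bq\perp L_0$ by construction, $\bq=\lambda h^2\bz$ with $\lambda\ne0$ forces $\bz\perp L_0$. Conversely, suppose $\bz\perp L_0$ and put $\mathbf p=\lambda h^2\bz$. Then $\bq=(I-Q)\mathbf p=\mathbf p$, so $L'=\lambda h^2\bz+L_0$, and the two stated conditions hold. Substituting them back into \eqref{eq:affine} gives equality. When $\lambda=0$, equality forces $t=0$, that is $d=0$, so $L'$ passes through $S$, and the converse is clear.

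For best possibility it suffices to show that the equality configuration exists, which needs a $k$-dimensional $L_0$ inside $\bz^\perp$. This is available because $\bz^\perp$ has dimension $n-1\ge k$. So for every $\lambda$ the bound is attained, for instance by $L'=\lambda h^2\bz+L_0$ with $L_0\subseteq\bz^\perp$.

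The only delicate point is the equality bookkeeping: matching the sign of $\cos\varphi$ with the sign of $\lambda$ and converting the conditions on $d$ and $\varphi$ into the vector condition $\overrightarrow{SF}=\lambda h^2\bz$. The normalisation in Lemma~\ref{lem:altitude} is what makes this conversion work. Note that $F$ lies on the segment from $S$ toward the opposite facet exactly when $0<\lambda\le1$; this is not needed for the proof.
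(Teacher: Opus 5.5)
Your proof is correct and follows the paper's argument. You bound the correction term $t^2-2\lambda t\cos\varphi$ from \eqref{eq:affine} below by $t^2-2|\lambda|t\ge-\lambda^2$, read off equality as $t=|\lambda|$ and $\cos\varphi=\operatorname{sign}\lambda$, and realise it by choosing $L_0\subseteq\bz^\perp$. Your explicit translation into $\overrightarrow{SF}=\lambda h^2\bz$ via $\norm{\bz}=1/h$, together with your observation that $\bz\perp L_0$ then follows automatically from $\bq\perp L_0$, simply spells out the step the paper states without detail.
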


\begin{proof}
Put $t=d/h\ge0$. The correction in \eqref{eq:affine} satisfies
\[
t^2-2\lambda t\cos\varphi
\ge
t^2-2|\lambda|t
\ge
-\lambda^2.
\]
If $\lambda\ne0$, equality requires $t=|\lambda|$ and
$\cos\varphi=\operatorname{sign}(\lambda)$, which is equivalent to the stated vector
conditions. These conditions can be realised by choosing $L_0\subseteq\bz^\perp$. If
$\lambda=0$, equality is simply $t=0$, which means that $L'$ passes through $S$.
\end{proof}

\begin{remark}[Why the affine case needs correction terms]\label{rem:noaffine}
When $L'$ passes through $S$, the normalised quantity in \eqref{eq:identity} is unchanged by
a common dilation of the edge vectors. If $L'$ does not pass through $S$, the ratio $d/h$
changes under such a dilation. Therefore no identity with a constant right-hand side can
hold in the affine setting without recording the position of $L'$. Formula
\eqref{eq:affine} gives the exact correction. In particular, the midpoint value
$\lambda=\tfrac12$, degenerate for linear subspaces, becomes
\[
\sum_{i=1}^{n}\frac{M_iH_i^2}{SA_i^2}
=
\frac n4+\frac{d^2}{h^2}-\frac dh\cos\varphi.
\]
\end{remark}

\section{The vector form}\label{sec:vector}

Let $\bu_1,\ldots,\bu_n$ be nonzero mutually perpendicular vectors, $n\ge2$, and put
\[
\bv=\sum_{i=1}^{n}\bu_i.
\]
Since $\langle\bu_i,\bv\rangle=\norm{\bu_i}^2$, the orthogonal projection onto the line
$\R\bv$ is
\[
Q\bu_i=\frac{\norm{\bu_i}^2}{\norm\bv^2}\,\bv.
\]
Thus the original vector inequality is the $k=1$ case in which the projection line is the
diagonal determined by the vectors themselves.

\begin{theorem}\label{thm:vector}
For every real $t$,
\begin{equation}\label{eq:vec1}
\sum_{i=1}^{n}
\frac{\bigl\lVert\norm{\bu_i}^{2}\bv+t\norm\bv^{2}\bu_i\bigr\rVert^{2}}
{\norm{\bu_i}^{2}}
=
(nt^2+2t+1)\norm\bv^4,
\end{equation}
and consequently
\begin{equation}\label{eq:vec2}
\sum_{i=1}^{n}
\bigl\lVert\norm{\bu_i}^{2}\bv+t\norm\bv^{2}\bu_i\bigr\rVert
\le
\sqrt{nt^2+2t+1}\,\norm\bv^3.
\end{equation}
For $t=0$, equality in \eqref{eq:vec2} holds for every configuration. For $t\ne0$, equality
holds if and only if
\[
\norm{\bu_1}=\cdots=\norm{\bu_n}.
\]
Hence the constant is best possible for every $t$.
\end{theorem}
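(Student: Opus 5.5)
Our plan is to reduce \eqref{eq:vec1} to Theorem~\ref{thm:right} (equivalently Proposition~\ref{prop:master} with $M=I$) in the case $k=1$, $L=\R\bv$, and then obtain \eqref{eq:vec2} from Theorem~\ref{thm:operator}. By the formula for $Q\bu_i$ displayed just before the statement,
\[
\norm{\bu_i}^2\bv+t\norm\bv^2\bu_i=\norm\bv^2\,(Q+tI)\bu_i .
\]
Applying \eqref{eq:identity} with $k=1$ and $\lambda=-t$ gives $\sum_i\norm{(Q+tI)\bu_i}^2/\norm{\bu_i}^2=nt^2+2t+1$. Multiplying by $\norm\bv^4$ yields \eqref{eq:vec1}. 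We would also check this directly: expanding the square, the middle term is $2t\norm{\bu_i}^2\norm\bv^2\langle\bv,\bu_i\rangle=2t\norm{\bu_i}^4\norm\bv^2$. After dividing by $\norm{\bu_i}^2$ and summing, the three terms give $\norm\bv^4+2t\norm\bv^4+nt^2\norm\bv^4$, using $\sum_i\norm{\bu_i}^2=\norm\bv^2$ from the Pythagorean relation.

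For \eqref{eq:vec2}, we apply the Cauchy--Schwarz part of Theorem~\ref{thm:operator} to $T=\norm\bv^2(Q+tI)$, restricted to $V=\operatorname{span}(\bu_i)$. Here $\bv\in V$, so $Q$ maps $V$ into itself. This gives
\[
\sum_i\norm{T\bu_i}\le \norm{T}_{\HS}\Bigl(\sum_i\norm{\bu_i}^2\Bigr)^{1/2}=\sqrt{nt^2+2t+1}\,\norm\bv^2\cdot\norm\bv .
\]
Note that $nt^2+2t+1>0$ whenever $n\ge2$, so the square root is defined.

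The equality analysis is the main step. By Theorem~\ref{thm:operator}, equality holds iff $\norm{T\bu_i}=c\norm{\bu_i}^2$ for some $c\ge0$. Write $a_i=\norm{\bu_i}$ and $w=\norm\bv$. Orthogonally decomposing $T\bu_i=(a_i^2+tw^2)\bv-tw^2(\bv-\bu_i)$ is awkward, so we would instead compute the norm directly:
\[
\norm{T\bu_i}^2=a_i^4w^2+2ta_i^4w^2+t^2w^4a_i^2=a_i^2w^2\bigl((1+2t)a_i^2+t^2w^2\bigr).
\]
The equality condition then reads $w^2\bigl((1+2t)a_i^2+t^2w^2\bigr)=c^2a_i^2$ for all $i$, that is, $t^2w^4=(c^2-(1+2t)w^2)a_i^2$.

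We now split into cases. If $t=0$, the condition holds with $c=w$, so equality always holds; indeed both sides of \eqref{eq:vec2} equal $w^3$ directly. If $t\ne0$, the left side is a positive constant, so $(c^2-(1+2t)w^2)a_i^2$ is independent of $i$ and nonzero. Hence all $a_i^2$ are equal. Conversely, if all $a_i$ are equal, $c$ can be solved for from the single equation, and the required $c^2\ge0$ is automatic since the left-hand expression is a norm squared.

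The sharpness claim follows because an equality configuration exists for every $t$: any configuration works when $t=0$, and equal lengths work when $t\ne0$. The only care point is that the equality condition must be expressed through $a_i$ with a uniform constant $c$; the rest is routine.
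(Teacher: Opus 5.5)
Your proposal is correct and follows the paper's proof essentially step for step. You obtain \eqref{eq:vec1} from Theorem~\ref{thm:right} with $k=1$ and $\lambda=-t$, get \eqref{eq:vec2} by Cauchy--Schwarz using $\sum_i\norm{\bu_i}^2=\norm\bv^2$, and settle equality by the same direct computation $\norm{T\bu_i}^2/a_i^4=\norm\bv^2\bigl(1+2t+t^2\norm\bv^2/a_i^2\bigr)$, with your direct expansion of \eqref{eq:vec1} and the restriction to $\operatorname{span}(\bu_i)$ being harmless extras.
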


\begin{proof}
Theorem~\ref{thm:right} with $k=1$ and $\lambda=-t$ gives
\[
\sum_{i=1}^{n}\frac{\norm{Q\bu_i+t\bu_i}^2}{\norm{\bu_i}^2}
=nt^2+2t+1.
\]
Substituting the formula for $Q\bu_i$ and multiplying by $\norm\bv^4$ gives
\eqref{eq:vec1}. Equation \eqref{eq:vec2} follows from Cauchy--Schwarz and
$\sum_i\norm{\bu_i}^2=\norm\bv^2$.

For equality, put $a_i=\norm{\bu_i}$ and $S=\norm\bv^2$. Equality in the
Cauchy--Schwarz step is equivalent to
\[
\frac{\bigl\lVert a_i^2\bv+tS\bu_i\bigr\rVert}{a_i^2}
\]
being independent of $i$. A direct calculation gives
\[
\frac{\bigl\lVert a_i^2\bv+tS\bu_i\bigr\rVert^2}{a_i^4}
=
S\left(1+2t+\frac{t^2S}{a_i^2}\right).
\]
If $t=0$, this is independent of $i$ automatically. If $t\ne0$, it is independent of $i$
if and only if all $a_i$ are equal. The converse is immediate.
\end{proof}

\begin{corollary}\label{cor:vector}
With $\bv=\sum_i\bu_i$,
\[
\sum_{i=1}^{n}
\frac{\bigl\lVert\norm{\bu_i}^{2}\bv-\norm\bv^{2}\bu_i\bigr\rVert^{2}}
{\norm{\bu_i}^{2}}
=
(n-1)\norm\bv^4,
\]
and
\[
\sum_{i=1}^{n}
\bigl\lVert\norm{\bu_i}^{2}\bv-\norm\bv^{2}\bu_i\bigr\rVert
\le
\sqrt{n-1}\,\norm\bv^3.
\]
The constant $\sqrt{n-1}$ is best possible, with equality precisely when the orthogonal
vectors have equal lengths. Dividing by $\norm\bv^2$ recovers \eqref{eq:motivating} for the
particular line $SP$.
\end{corollary}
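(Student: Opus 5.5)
The identity and the inequality follow from Theorem~\ref{thm:vector} with $t=-1$, so I would take that route rather than redo the computation. Substituting $t=-1$ into \eqref{eq:vec1} gives the coefficient $n-2+1=n-1$, which is the first display. Substituting into \eqref{eq:vec2} gives the coefficient $\sqrt{n-1}$, which is the second display. The standing hypothesis $n\ge2$ makes $n-1\ge1$, so the square root is meaningful.

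For the equality case, $t=-1\ne0$, so Theorem~\ref{thm:vector} says equality holds exactly when $\norm{\bu_1}=\cdots=\norm{\bu_n}$. Equal-length orthogonal vectors exist for every $n$, for example an orthonormal basis. Hence equality is attained and the constant $\sqrt{n-1}$ is best possible. As a sanity check, the formula
\[
\frac{\norm{a_i^2\bv-S\bu_i}^2}{a_i^4}=S\Bigl(\frac{S}{a_i^2}-1\Bigr)
\]
is positive whenever $n\ge2$, because $S=\norm\bv^2>a_i^2$. So no term degenerates.

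For the last sentence, I would interpret geometrically. Take $S$ as the origin, $A_i=\bu_i$, and $P=\bv$. Then $SP=\norm\bv$, and the orthogonal projection of $A_i$ onto the line $SP$ is $H_i$ with $\overrightarrow{SH_i}=a_i^2\bv/\norm\bv^2$. This gives
\[
\overrightarrow{H_iA_i}=\frac{\norm\bv^2\bu_i-a_i^2\bv}{\norm\bv^2},
\]
so $A_iH_i=\norm{a_i^2\bv-\norm\bv^2\bu_i}/\norm\bv^2$. Dividing the inequality by $\norm\bv^2$ then yields $\sum_iA_iH_i\le\sqrt{n-1}\,SP$, which is \eqref{eq:motivating} for $L=SP$.

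The only point needing care is the sharpness claim in this specialised setting. The line here is forced to be the diagonal, not chosen freely. Even so, equality is attained at equal lengths, so the constant remains optimal for this restricted family of lines. I do not expect any real obstacle.
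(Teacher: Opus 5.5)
Your proposal is correct and follows the paper's own route. The corollary is the specialisation $t=-1$ of Theorem~\ref{thm:vector}, and the equality case comes from its $t\ne0$ clause. Your identification $A_iH_i=\norm{a_i^2\bv-\norm\bv^2\bu_i}/\norm\bv^2$ is just the projection formula $Q\bu_i=\norm{\bu_i}^2\bv/\norm\bv^2$ stated at the start of Section~\ref{sec:vector}.
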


\section{Concluding remarks}\label{sec:conclusion}

The master identity provides a common framework for all the results above. In the orthogonal case, the trace term is
exactly the dimension of the projection subspace and yields a sharp family of distance
inequalities. For a general simplex, the Gram spectrum gives the sharp range of that term.
The converse results show that the collapse of this range is not accidental: it
characterises orthogonality, while the quantitative estimate measures how far the edge
directions are from being mutually perpendicular. The affine formula shows precisely
what changes when the projection subspace no longer passes through the distinguished
vertex.

Two directions seem natural for further study. First, the operator formulation suggests
looking for families of linear maps other than $Q-\lambda I$ whose Hilbert--Schmidt norm has
a comparably transparent geometric interpretation. Second, one may ask whether an analogue
of the affine identity exists for right simplices in spherical or hyperbolic geometry,
where orthogonal projection is no longer linear.

\section*{Acknowledgements}

The author thanks ChatGPT 5.6 Sol for assistance with the presentation of the manuscript and for checking some computations related to the constants appearing in the results. The mathematical content and final conclusions were verified by the author.

\begin{flushright}
	Quang Hung Tran,\\
	High School for Gifted Students,\\
	Vietnam National University, Hanoi, Vietnam,\\
	Email: tranquanghung@hus.edu.vn\\
	ORCID: 0000-0003-2468-4972
\end{flushright}


\begin{thebibliography}{99}

\bibitem{Berger}
M. Berger,
\textit{Geometry I},
Springer, Berlin, 1987.

\bibitem{Bhatia}
R. Bhatia,
\textit{Matrix Analysis},
Graduate Texts in Mathematics 169, Springer, New York, 1997.
doi:10.1007/978-1-4612-0653-8.

\bibitem{BjorckGolub}
{\AA}. Bj\"orck and G. H. Golub,
Numerical methods for computing angles between linear subspaces,
\textit{Math. Comp.} \textbf{27} (1973), 579--594.
doi:10.1090/S0025-5718-1973-0348991-3.

\bibitem{ConantBeyer}
D. R. Conant and W. A. Beyer,
Generalized Pythagorean theorem,
\textit{Amer. Math. Monthly} \textbf{81} (1974), 262--265.
doi:10.1080/00029890.1974.11993544.

\bibitem{Cook}
W. J. Cook,
An $n$-dimensional Pythagorean theorem,
\textit{College Math. J.} \textbf{44} (2013), no. 2, 98--101.
doi:10.4169/college.math.j.44.2.098.

\bibitem{Coxeter}
H. S. M. Coxeter,
\textit{Introduction to Geometry},
2nd ed., Wiley, New York, 1969.

\bibitem{DonchianCoxeter}
P. S. Donchian and H. S. M. Coxeter,
An $n$-dimensional extension of Pythagoras' theorem,
\textit{Math. Gaz.} \textbf{19} (1935), 206.
doi:10.2307/3605876.

\bibitem{Drucker}
D. Drucker,
A comprehensive Pythagorean theorem for all dimensions,
\textit{Amer. Math. Monthly} \textbf{122} (2015), no. 2, 164--168.
doi:10.4169/amer.math.monthly.122.02.164.

\bibitem{EiflerRhee}
L. Eifler and N. H. Rhee,
The $n$-dimensional Pythagorean theorem via the divergence theorem,
\textit{Amer. Math. Monthly} \textbf{115} (2008), 456--457.
doi:10.1080/00029890.2008.11920550.

\bibitem{Fan}
K. Fan,
On a theorem of Weyl concerning eigenvalues of linear transformations I,
\textit{Proc. Nat. Acad. Sci. U.S.A.} \textbf{35} (1949), 652--655.
doi:10.1073/pnas.35.11.652.

\bibitem{Fiedler}
M. Fiedler,
\textit{Matrices and Graphs in Geometry},
Cambridge University Press, Cambridge, 2011.
doi:10.1017/CBO9780511973611.

\bibitem{HLP}
G. H. Hardy, J. E. Littlewood and G. P\'olya,
\textit{Inequalities},
2nd ed., Cambridge University Press, Cambridge, 1952.

\bibitem{AHorn}
A. Horn,
Doubly stochastic matrices and the diagonal of a rotation matrix,
\textit{Amer. J. Math.} \textbf{76} (1954), 620--630.
doi:10.2307/2372705.

\bibitem{HornJohnson}
R. A. Horn and C. R. Johnson,
\textit{Matrix Analysis},
2nd ed., Cambridge University Press, Cambridge, 2013.

\bibitem{LinLin}
S.-Y. T. Lin and Y.-F. Lin,
The $n$-dimensional Pythagorean theorem,
\textit{Linear Multilinear Algebra} \textbf{26} (1990), 9--13.
doi:10.1080/03081089008817961.

\bibitem{Porter}
G. J. Porter,
$k$-volume in $\R^{n}$ and the generalized Pythagorean theorem,
\textit{Amer. Math. Monthly} \textbf{103} (1996), no. 3, 252--256.
doi:10.1080/00029890.1996.12004732.

\bibitem{TranKlamkin}
Q. H. Tran,
Some strengthened versions of Klamkin's inequality and applications,
\textit{Geom. Dedicata} \textbf{213} (2021), 467--472.
doi:10.1007/s10711-020-00591-x.

\bibitem{TranVanAubel}
Q. H. Tran,
Extending a theorem of van Aubel to the simplex,
\textit{J. Geom. Graph.} \textbf{25} (2021), no. 2, 253--263.

\bibitem{TranPythagoras}
Q. H. Tran,
A new proof of the $n$-dimensional Pythagorean theorem,
\textit{Math. Gaz.} \textbf{106} (2022), no. 565, 136--137.
doi:10.1017/mag.2022.27.

\bibitem{TranDeGua}
Q. H. Tran,
A generalization of de Gua's theorem with a vector proof,
\textit{Math. Intelligencer} \textbf{46} (2024), no. 3, 236--238.
doi:10.1007/s00283-023-10288-0.

\bibitem{TranGrace}
Q. H. Tran,
A simple proof of Grace's inequality in a right tetrahedron,
\textit{J. Geom.} \textbf{117} (2026), Article 17.
doi:10.1007/s00022-026-00800-0.

\end{thebibliography}
\end{document}